\numberwithin{equation}{section}
\newcommand{\Sub}{\mathrm{Sub}}
\newcommand{\mcal}{\mathcal}
\newcommand{\N}{\mathbb N}
\newcommand{\C}{\mathbb C}
\newcommand{\mA}{\mathcal{A}}
\newcommand{\mB}{\mathcal{B}}
\newcommand{\mC}{\mathcal{C}}
\newcommand{\mD}{\mathcal{D}}
\newcommand{\mP}{\mathcal{P}}
\newcommand{\mL}{\mathcal{L}}
\newcommand{\mM}{\mathcal{M}}
\newcommand{\mN}{\mathcal{N}}
\newcommand{\mH}{\mathcal{H}}
\newcommand{\mK}{\mathcal{K}}
\newcommand{\mS}{\mathcal{S}}
\newcommand{\omin}{\otimes^{\min}}
\newtheorem{theorem}{Theorem}[section]
\newtheorem{lemma}[theorem]{Lemma}
\newtheorem{cor}[theorem]{Corollary}
\newtheorem{definition}[theorem]{Definition}
\newtheorem{proposition}[theorem]{Proposition}
\newtheorem{remark}[theorem]{Remark}
\begin{document}

 \title[{\tiny Distances between $C^*$-subalgebras
   and related operator algebras}]{Relations amongst the
   distances between $C^*$-subalgebras and  some canonically associated
   operator algebras}

\author[V P Gupta]{Ved Prakash Gupta} \author[S
  Kumar]{Sumit Kumar} \address{School of Physical Sciences, Jawaharlal Nehru University, New Delhi, INDIA}
\email{vedgupta@mail.jnu.ac.in} \email{sumitkumar.sk809@gmail.com}

\subjclass[2020]{46L05,47L40, 46M05}

\keywords{Inclusions of $C^*$-algebras, Kadison-Kastler distance,
  Christensen distance, enveloping von Neumann algebra, minimal tensor
  product.}

\thanks{The second named author was  supported
  by the Council of Scientific and Industrial Research (Government
  of India) through a Senior Research Fellowship with
  CSIR File No. {\bf 09/0263(12012)/2021-EMR-I} }
\maketitle

\begin{abstract}
We prove that the Christensen distance (resp., the Kadison-Kastler
distance) between two $C^*$-subalgebras $\mA$ and $\mB$ of a
$C^*$-algebra $\mC$ is equal to that between their enveloping von
Neumann algebras $\mA^{**}$ and $\mB^{**}$ (resp., the tensor product
algebras $\mA \omin \mD$ and $\mB \omin \mD$, for any unital
commutative $C^*$-algebra $\mD$). 
\end{abstract}

\section{Introduction}
Kadison and Kastler (in \cite{KK}) introduced a notion of distance
between subspaces of $B(\mH)$ and, over the last five decades, this
notion has proved to be extremely relevant to the development of the
theory of operator algebras. They suggested that sufficiently close
operator subalgebras of $B(\mH)$ must be unitarily equivalent.  Some
pathbreaking positive results in this direction were achieved in a
series of seminal papers by Christensen. Later, Christensen
(in \cite{Ch1,Ch2}) considered another similar notion of distance and proved
some noteworthy perturbation results (\cite{Ch1, Ch2, Ch3, Ch4} and more).

In this article,  we make an  attempt to  answer the
following two fundamental questions related to these notions of distance:
\begin{enumerate}
\item For any two $C^*$-subalgebras $\mA$ and $\mB$ of a $C^*$-algebra
  $\mC$, is there any relationship amidst the (Kadison-Kastler or
  Christensen) distance between $\mA$ and $\mB$ (in $\mC$) and that
  between their enveloping von Neumann algebras $\mA^{**}$ and
  $\mB^{**}$ (in $\mC^{**}$)?

\item For any two $C^*$-algebras $\mC$ and $\mD$; and, $C^*$-subalgebras $\mA$
and $\mB$ of $\mC$, is there any relationship  amidst the
(Kadison-Kastler or Christensen) distance between $\mA$ and
$\mB$ (in $\mC$) and that between $\mA \omin \mD$ and $\mB \omin \mD$
(in $\mC \omin \mD$)?
\end{enumerate}

The first question found an indirect appearance in the  works of
Kadison-Kastler (\cite{KK}) and Christensen (\cite{Ch2}) in the
following sense:

(For the sake of clarity, as
in \cite{GK}, we denote the notions of the Kadison-Kastler distance and the
Christensen distance by $d_{KK}$ and $d_0$, respectively - see
\Cref{prelims} for their definitions.)

\begin{itemize}
  \item In \cite[Lemma 5]{KK}, Kadison and Kastler showed that, for a
    scalar $\gamma >0$ and $C^*$-algebras $\mA$ and $\mB$ acting on a
    Hilbert space $\mH$, $d_{KK}(\overline{\mA}^{SOT},
    \overline{\mB}^{SOT}) < \gamma$ whenever $d_{KK}(\mA, \mB) <
    \gamma$.

 Thus, it can be deduced readily that for any two $C^*$-subalgebras
 $\mA$ and $\mB$ of a $C^*$-algebra $\mC$, $d_{KK}({\mA}^{**},
 {\mB}^{**}) \leq d_{KK}(\mA, \mB)$ - see
 \Cref{KK-inequality}.\smallskip

\item In \cite[Theorem 6.5]{Ch2}, Christensen used the above
  observation of Kadison and Kastler to conclude that if $\mA$ and
  $\mB$ are sufficiently close $C^*$-subalgebras of a $C^*$-algebra
  $\mC$  and $\mA$ is nuclear, then $\mB$ is nuclear,
  $\mB^{**}$ and $\mA^{**}$ are isomorphic as $W^*$-algebras, and
  there exists a completely positive isometry from $\mA^*$ onto $\mB^*$.
\end{itemize}

The second question was considered to  a greater 
extent by Christensen (see, for instance, \cite{Ch2} and
\cite{Ch4}). Here are some of his interesting observations in this
direction:

\begin{theorem}\cite[Theorem 3.1]{Ch2}
Let $\mD$ be a nuclear $C^*$-algebra and,  $\mA$ and $\mB$ be 
$C^*$-subalgebras of a $C^*$-algebra $\mC$. If $\mA$
has property $D_{k}$ (for some $k \in (0,\infty)$) and $\mA
\subset_{\gamma}\mB$, then $\mA \omin \mD \subset_{6k\gamma}\mB \omin
\mD$.

In particular, when $\mA$ and $\mB$ both have property $D_{k}$, then
\[
d_0(\mA \omin \mD, \mB\omin \mD) \leq 6k\, 
d_0(\mA, \mB).
\]
\end{theorem}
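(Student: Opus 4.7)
The plan is to reduce the near-inclusion at the tensor-product level to a uniform matrix-level near-inclusion of the form $M_n(\mA) \subset_{6k\gamma} M_n(\mB)$, and then to exploit the nuclearity of $\mD$ to approximate arbitrary elements of $\mA \omin \mD$ by elements of $M_n(\mA)$ via the completely positive approximation property.

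First, I expect to establish the matrix amplification: combining property $D_k$ for $\mA$ with $\mA \subset_\gamma \mB$ should yield $M_n(\mA) \subset_{6k\gamma} M_n(\mB)$ with a constant independent of $n$. The factor $6k$ arises naturally from decomposing an element in the unit ball of $M_n(\mA)$ into four positive contractions (a factor of $4$), invoking property $D_k$ on each to produce a controlled row-column expansion whose entrywise perturbation into $\mB$ costs $k\gamma$ rather than the naive $\sqrt{n}\gamma$, and absorbing a further small constant from the bookkeeping. Without $D_k$ one cannot break the $n$-dependence; with it, scalar near-inclusions lift to all matrix levels under a single constant.

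Given $x$ in the unit ball of $\mA \omin \mD$ and $\varepsilon > 0$, the completely positive approximation property for the nuclear algebra $\mD$ provides completely positive contractions $\phi : \mD \to M_n$ and $\psi : M_n \to \mD$ with $\|(\mathrm{id} \otimes \psi\phi)(x) - x\| < \varepsilon$. The element $(\mathrm{id} \otimes \phi)(x)$ lies in $\mA \omin M_n \cong M_n(\mA)$ and is a contraction; by the matrix-level near-inclusion, it is within $6k\gamma$ of some element of $M_n(\mB)$. Applying $\mathrm{id} \otimes \psi$ transports this approximant back into $\mB \omin \mD$, yielding an element within $6k\gamma + \varepsilon$ of $x$. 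Letting $\varepsilon \to 0$ gives the asymmetric inclusion, and the in-particular statement follows by symmetry between $\mA$ and $\mB$ (both enjoying $D_k$) together with the definition of $d_0$.

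The main obstacle is the matrix-level step: establishing $M_n(\mA) \subset_{6k\gamma} M_n(\mB)$ uniformly in $n$. This is the genuine quantitative content of property $D_k$ and requires a careful decomposition converting scalar near-inclusions into matrix-level ones without accumulating factors of $\sqrt{n}$. The subsequent reduction via nuclearity is more routine but must be executed carefully so that the completely positive factorizing maps do not distort the relevant $C^*$-norms on the tensor products.
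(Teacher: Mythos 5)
First, a point of order: the paper does not prove this statement. It is quoted verbatim from Christensen \cite[Theorem 3.1]{Ch2} as background for the questions studied here, so there is no in-paper proof to compare against; I can only assess your outline on its own terms. Your reduction via nuclearity is essentially fine: approximate $x$ by a finite sum in $\mA\otimes\mD$, choose the completely positive contractions $\phi:\mD\to M_n$ and $\psi:M_n\to\mD$ adapted to the finitely many $\mD$-legs, and use that $\mathrm{id}\otimes\phi$ and $\mathrm{id}\otimes\psi$ are contractive on minimal tensor products; this does transport a uniform matrix-level near inclusion up to $\mA\omin\mD\subset_{6k\gamma}\mB\omin\mD$, and the ``in particular'' clause then follows as you say.

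The genuine gap is exactly where you locate it, and the mechanism you sketch for closing it would not work. Property $D_k$ is a statement about commutants --- $d(x,\pi(\mA)')\le k\,\sup\{\|[x,\pi(a)]\|:a\in B_1(\mA)\}$ for every representation $\pi$ --- and it gives no ``entrywise'' handle on matrices over $\mA$: fixing up a contraction in $M_n(\mA)$ entry by entry inevitably accumulates a dimension-dependent factor, and splitting it into four positive contractions neither removes that nor accounts for the constant (your bookkeeping yields $4k$ plus an unspecified remainder, not $6k$). The way $D_k$ is actually used runs through commutants: from $\mA\subseteq_\gamma\mB$ one gets $\|[x',a]\|=\|[x',a-b]\|\le 2\gamma$ for $x'\in B_1(\mB')$ and $a\in B_1(\mA)$, hence $\mB'\subseteq_{2k\gamma}\mA'$ by $D_k$; this commutant near inclusion is automatically stable under amplification because $(\mM\,\bar{\otimes}\,B(\mK))'=\mM'\otimes\C 1$; one then needs a separate lemma of Christensen converting a near inclusion of commutants back into a near inclusion of the weak closures of the algebras, which costs the further factor of $3$ (whence $6k$); and finally one descends from weak closures to the $C^*$-algebras by the Kaplansky-density/Hahn--Banach argument --- precisely the technique the present paper borrows from \cite[Theorem 3.1]{Ch2} to prove \Cref{enveloping-distance}. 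Nuclearity of $\mD$ also enters here through injectivity of $\mD''$, not only through the approximation property. Without the commutant-reversal lemma and the descent step, your outline asserts its central claim rather than proving it.
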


Interestingly, when $\mD$ is commutative, the comparison is more
satisfying:
\begin{theorem}\cite[Theorem 3.2]{Ch2}\label{christensen-commutative}
Let $\mD$ be a commutative $C^*$-algebra,  $\mA$ and $\mB$ be 
$C^*$-subalgebras of a $C^*$-algebra $\mC$ and $\gamma$ be a positive scalar.
If $\mB \subset_{\gamma}\mA$, then $\mB \omin \mD \subset_{\gamma}\mA
\omin \mD$.

In particular, $d_0(\mA \omin \mD, \mB\omin \mD)  \leq d_0(\mA, \mB)$.  
\end{theorem}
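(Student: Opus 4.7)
The plan is to use Gelfand duality to represent $\mD$ as $C_0(X)$ for a locally compact Hausdorff space $X$, and to exploit the canonical isometric identification $\mE \omin C_0(X) \cong C_0(X, \mE)$ valid for any $C^*$-algebra $\mE$, where the right-hand side carries the supremum norm on continuous $\mE$-valued functions vanishing at infinity. This identification is natural in $\mE$, so the inclusions $\mB \omin \mD \hookrightarrow \mC \omin \mD$ and $\mA \omin \mD \hookrightarrow \mC \omin \mD$ correspond to $C_0(X, \mB) \hookrightarrow C_0(X, \mC)$ and $C_0(X, \mA) \hookrightarrow C_0(X, \mC)$, respectively. The statement $\mB \omin \mD \subset_\gamma \mA \omin \mD$ therefore reduces to the following function-theoretic task: given $f \in C_0(X, \mB)$ with $\|f\|_\infty \leq 1$, produce $g \in C_0(X, \mA)$ with $\|f - g\|_\infty \leq \gamma$.

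The construction of $g$ proceeds by a partition-of-unity gluing. Fix $\epsilon > 0$. For each $x \in X$, use $\mB \subset_\gamma \mA$ to pick $a_x \in \mA$ with $\|f(x) - a_x\| \leq \gamma$; continuity of $f$ yields an open neighborhood $U_x$ of $x$ on which $\|f(y) - a_x\| < \gamma + \epsilon$. Since $f$ vanishes at infinity, fix a compact set $K \subset X$ with $\|f(y)\| < \epsilon$ for $y \notin K$, cover $K$ by finitely many $U_{x_1}, \dots, U_{x_n}$, and choose a subordinate partition of unity $\phi_1, \dots, \phi_n \in C_c(X)$ with $\phi_i \geq 0$, $\mathrm{supp}\,\phi_i \subset U_{x_i}$, $\sum_i \phi_i \leq 1$ on $X$, and $\sum_i \phi_i \equiv 1$ on $K$. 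Setting $g := \sum_i \phi_i a_{x_i} \in C_c(X, \mA)$ and $\phi_0 := 1 - \sum_i \phi_i$, the identity
\[
f(y) - g(y) \;=\; \phi_0(y)\, f(y) + \sum_{i=1}^n \phi_i(y)\bigl(f(y) - a_{x_i}\bigr)
\]
and the support conditions give $\|f(y) - g(y)\| \leq \phi_0(y)\|f(y)\| + (\gamma + \epsilon)\bigl(1 - \phi_0(y)\bigr)$; because $\phi_0$ vanishes on $K$ while $\|f(y)\| < \epsilon$ off $K$, this bounds $\|f - g\|_\infty \leq \gamma + 2\epsilon$. Letting $\epsilon \downarrow 0$ yields $\mB \omin \mD \subset_\gamma \mA \omin \mD$, and applying the same argument with the roles of $\mA$ and $\mB$ exchanged gives the $d_0$-inequality.

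The subtle point, and the one place I expect real care, is the non-unital case. When $\mD$ lacks a unit, $X$ is merely locally compact, no finite partition of unity sums to $1$ globally on $X$, and the naive sum $\sum_i \phi_i a_{x_i}$ need not even lie in $C_0(X, \mA)$. Restricting the partition to a compact core $K$ on which $f$ is appreciable, and using the slack function $\phi_0$ as a zero-weighted constant $a_\infty := 0 \in \mA$, simultaneously ensures $g \in C_c(X, \mA)$ and keeps $\|f - g\|$ under control off $K$, because $f$ is already small there. When $\mD$ is unital, $X$ is compact, $\phi_0 \equiv 0$, and the argument collapses to the standard finite partition-of-unity gluing on a compact space.
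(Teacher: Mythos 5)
Your argument is correct and is essentially the route the paper takes: this statement is quoted from Christensen, and the paper's own proof of the Kadison--Kastler analogue (\Cref{KK-D-commutative}) uses exactly the same identification $\mC \omin C_0(\Omega) \cong C_0(\Omega,\mC)$ together with a partition-of-unity gluing over a compact core outside of which $f$ is small (your single identity $f-g=\phi_0 f+\sum_i\phi_i(f-a_{x_i})$ is in fact a tidier packaging of the paper's three-case estimate). The one adjustment needed is in the epsilon management: since the conclusion is the \emph{strict} near-inclusion $\subset_\gamma$, you should invoke the hypothesis in its strict form (some $\gamma_0<\gamma$ with $\mB\subseteq_{\gamma_0}\mA$), pick $a_x$ with $\|f(x)-a_x\|\leq\gamma_0$, and fix $\epsilon$ with $\gamma_0+2\epsilon<\gamma$; as written, letting $\epsilon\downarrow 0$ only yields $\subseteq_{\gamma'}$ for every $\gamma'>\gamma$, although the $d_0$ inequality you deduce at the end is unaffected.
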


\begin{theorem}\cite[Proposition 2.10]{Ch4}
Let $\mH$ be a Hilbert space and, $\mA$ and $ \mB$ be $C^*$-subalgebras
of $B(\mH)$.  If $\mA$ has length at most $\ell$ with length
constant at most $M$ and  $\mA \subseteq_{\gamma}\mB$ for some
$\gamma>0$, then $\mA \omin \mathbb{M}_{n}\subseteq_{\mu} \mB \omin
\mathbb{M}_{n}$ for all $n\in \mathbb{N}$, where $\mu=
M((1+\gamma)^{\ell}-1)$.

 In particular, if $\mB$ also has length at most $\ell$ with length
 constant at most $M$, then
 $$d_0(\mA \omin
\mathbb{M}_{n} , \mB \omin\mathbb{M}_{n} ) \leq M \, \left(\left(1+
d_0(\mA, \mB)\right)^\ell -1\right)$$
for all $n \in N$.
 \end{theorem}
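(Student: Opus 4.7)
The plan is to unpack the definition of length and then transfer the inclusion $\mA \subseteq_\gamma \mB$ to the matricial level by perturbing the algebra-valued pieces of a length factorisation one at a time. Recall that $\mA$ having length at most $\ell$ with length constant at most $M$ means that every $x \in \mA \omin \mathbb{M}_n$ admits a factorisation $x = \alpha_0 D_1 \alpha_1 \cdots D_\ell \alpha_\ell$, in which each $\alpha_i$ is a (possibly rectangular) scalar matrix, each $D_j$ is a diagonal matrix with entries from $\mA$, and $P := \|\alpha_0\| \prod_{j=1}^\ell \|D_j\| \|\alpha_j\| \leq M\|x\|$.

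Given such a factorisation, I would use $\mA \subseteq_\gamma \mB$ entry by entry to replace each $D_j$ by a diagonal matrix $D_j'$ with entries in $\mB$ satisfying $\|D_j - D_j'\| \leq \gamma \|D_j\|$ (and hence $\|D_j'\| \leq (1+\gamma)\|D_j\|$), using the fact that the norm of a diagonal matrix in $\mA \omin \mathbb{M}_k$ equals the supremum of the norms of its entries. Setting $y := \alpha_0 D_1' \alpha_1 \cdots D_\ell' \alpha_\ell \in \mB \omin \mathbb{M}_n$, the estimate of $\|x - y\|$ becomes a standard hybrid-telescope: defining
\[
X_i := \alpha_0 D_1' \alpha_1 \cdots D_i' \alpha_i D_{i+1} \alpha_{i+1} \cdots D_\ell \alpha_\ell,
\]
so that $X_0 = x$ and $X_\ell = y$, the difference $X_{i-1} - X_i$ isolates the single factor $(D_i - D_i')$ sandwiched between a ``perturbed'' prefix and an ``original'' suffix. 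Submultiplicativity gives $\|X_{i-1} - X_i\| \leq \gamma (1+\gamma)^{i-1} P$, and summing the geometric series yields
\[
\|x - y\| \;\leq\; \sum_{i=1}^{\ell} \gamma (1+\gamma)^{i-1} P \;=\; \bigl((1+\gamma)^\ell - 1\bigr) P \;\leq\; M\bigl((1+\gamma)^\ell - 1\bigr)\|x\| \;=\; \mu \|x\|,
\]
which is exactly the content of $\mA \omin \mathbb{M}_n \subseteq_\mu \mB \omin \mathbb{M}_n$.

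The ``in particular'' clause then follows at once: if $\mB$ also has length at most $\ell$ with length constant at most $M$, running the same argument with the roles of $\mA$ and $\mB$ interchanged gives the reverse inclusion with the same constant, so $d_0(\mA \omin \mathbb{M}_n, \mB \omin \mathbb{M}_n) \leq M((1+\gamma)^\ell - 1)$ for every $\gamma > d_0(\mA, \mB)$; letting $\gamma \downarrow d_0(\mA, \mB)$ yields the displayed bound.

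The main obstacle I anticipate is of the bookkeeping variety rather than anything conceptually deep: one must verify that the rectangular scalar factors $\alpha_i$ keep the dimensions aligned after the entry-wise replacement (so that $y$ genuinely lies in $\mB \omin \mathbb{M}_n$), and one must order the hybrids $X_i$ so that the telescoping produces the sharp factor $(1+\gamma)^{i-1}$ at stage $i$ rather than a coarser uniform $(1+\gamma)^\ell$ on every term (which would inflate $\mu$ by a further factor of $\ell$). Beyond these care-points, the entire content of the proposition is the observation that a perturbation of the $\mA$-entries propagates through a length factorisation with a linear error amplified by a factor of $(1+\gamma)$ per perturbed slot.
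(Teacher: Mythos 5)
Your argument is correct and is essentially the proof of \cite[Proposition 2.10]{Ch4}: factor $x$ through a length decomposition, perturb each diagonal block entrywise into $\mB$ using $\mA\subseteq_\gamma\mB$, and telescope to accumulate the factor $\gamma\sum_{i=1}^{\ell}(1+\gamma)^{i-1}=(1+\gamma)^{\ell}-1$ against the bound $P\leq M\|x\|$. Note that the paper under review only quotes this result from \cite{Ch4} without proof, so there is nothing internal to compare against; your reconstruction matches the source's argument, including the correct ordering of the hybrid terms needed to get $(1+\gamma)^{i-1}$ rather than a cruder uniform bound.
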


\begin{theorem}\cite[Corollary 4.7]{Ch4}
Let $\mC$ and $\mD$ be $C^*$-algebras and $\mD$ be nuclear. Then, for
every pair $\ell, M \in \N$, there exists a constant $L_{\ell,M}$
(depending only on $\ell$ and $M$) such  that when $\mA$ and $\mB$ are
$C^*$-subalgebras of $\mC$ and $\mA$ has length at
most $\ell$ and length constant at most $M$, then
\[
d_{KK}(\mA \omin \mD, \mB\omin \mD)\leq L_{\ell,M}\, d_{KK}(\mA, \mB).
\]

\end{theorem}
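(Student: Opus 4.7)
The plan is to reduce the problem to the matrix-algebra case already handled by Proposition 2.10 above, using the completely positive approximation property of nuclear $C^*$-algebras. Since $\mD$ is nuclear, there exist nets of unital completely positive maps $\phi_\lambda \colon \mD \to \mathbb{M}_{n_\lambda}$ and $\psi_\lambda \colon \mathbb{M}_{n_\lambda} \to \mD$ with $\psi_\lambda \circ \phi_\lambda \to \mathrm{id}_\mD$ in the point-norm topology. Tensoring with $\mathrm{id}_\mC$ yields contractive ucp maps
\[
\Phi_\lambda := \mathrm{id}_\mC \otimes \phi_\lambda \colon \mC \omin \mD \to \mC \omin \mathbb{M}_{n_\lambda}, \qquad \Psi_\lambda := \mathrm{id}_\mC \otimes \psi_\lambda \colon \mC \omin \mathbb{M}_{n_\lambda} \to \mC \omin \mD,
\]
that restrict correctly to the tensor subalgebras carved out by $\mA$ and by $\mB$, and satisfy $\Psi_\lambda \circ \Phi_\lambda \to \mathrm{id}_{\mC \omin \mD}$ in point-norm.

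Fix $\gamma > d_{KK}(\mA, \mB)$, so that $\mA \subseteq_\gamma \mB$. Given $x \in \mA \omin \mD$ with $\|x\| \leq 1$ and $\varepsilon > 0$, pick $\lambda$ with $\|x - \Psi_\lambda(\Phi_\lambda(x))\| < \varepsilon$. Since $\Phi_\lambda(x) \in \mA \omin \mathbb{M}_{n_\lambda}$ has norm at most one, Proposition 2.10 supplies $y_\lambda \in \mB \omin \mathbb{M}_{n_\lambda}$ with $\|\Phi_\lambda(x) - y_\lambda\| < \mu$, where $\mu := M\bigl((1+\gamma)^{\ell} - 1\bigr)$. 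Then $\Psi_\lambda(y_\lambda) \in \mB \omin \mD$, and contractivity of $\Psi_\lambda$ together with the triangle inequality yields $\|x - \Psi_\lambda(y_\lambda)\| < \varepsilon + \mu$. Letting $\varepsilon \to 0$ shows $\mA \omin \mD \subseteq_\mu \mB \omin \mD$.

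The reverse inclusion is the main difficulty, since the hypothesis endows only $\mA$ with a length estimate. I would handle it by invoking (or reproving) a Christensen-style perturbation result: when $d_{KK}(\mA, \mB)$ is below a threshold depending on $\ell$ and $M$, the algebra $\mB$ inherits length at most $\ell$ with length constant controlled by $\ell$ and $M$. A symmetric application of the argument above then produces $\mB \omin \mD \subseteq_{\mu'} \mA \omin \mD$ with a comparable $\mu'$. This transfer of the length property under Kadison-Kastler perturbation is what I expect to be the principal technical obstacle. Finally, using $(1+\gamma)^{\ell}-1 \leq (2^{\ell}-1)\gamma$ for $\gamma \leq 1$ linearizes the bound in that regime, while for $\gamma > 1$ the trivial estimate $d_{KK} \leq 2$ suffices; combining the two regimes and the two inclusions yields the required constant $L_{\ell,M}$ depending only on $\ell$ and $M$.
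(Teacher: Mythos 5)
This statement is quoted in the paper from \cite[Corollary 4.7]{Ch4} as background; the paper supplies no proof of it, so there is no internal argument to compare yours against, and I can only assess the proposal on its own terms. The first half is correct and is the standard reduction: tensoring a completely positive approximation of $\mathrm{id}_\mD$ through matrix algebras with $\mathrm{id}_\mC$, and feeding $\Phi_\lambda(x)$ into \cite[Proposition 2.10]{Ch4} (quoted above), does yield $\mA \omin \mD \subseteq_{\mu} \mB \omin \mD$ with $\mu = M\bigl((1+\gamma)^{\ell}-1\bigr)$ for every $\gamma > d_{KK}(\mA,\mB)$, and the convexity bound $(1+\gamma)^{\ell}-1 \leq (2^{\ell}-1)\gamma$ on $[0,1]$ linearizes this. (Two harmless slips: for non-unital $\mD$ the maps $\phi_\lambda,\psi_\lambda$ are contractive completely positive rather than unital, and $d_{KK}\leq 1$, not $2$.)

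The genuine gap is exactly where you flag it, and it is not a peripheral issue: it is the entire content of the cited corollary beyond the one-sided estimate. Your route to $\mB \omin \mD \subseteq_{\mu'} \mA \omin \mD$ requires that, once $d_{KK}(\mA,\mB)$ is below a threshold depending only on $\ell$ and $M$, the algebra $\mB$ itself has length at most $\ell$ with length constant controlled by $\ell$ and $M$. You neither prove this nor reduce it to anything quoted in the paper; the stability of the length invariant under Kadison--Kastler perturbations is precisely the main technical achievement of Section 4 of \cite{Ch4}, and it does not follow from soft considerations (it rests on the perturbation machinery for completely bounded factorizations developed there). Nor can the reverse inclusion be extracted from the length data of $\mA$ alone by the Proposition 2.10 mechanism, since that mechanism factors elements of the unit ball of the \emph{first} algebra. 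So as written your argument establishes only $\mA \omin \mD \subseteq_{L_{\ell,M}\, d_{KK}(\mA,\mB)} \mB \omin \mD$ and defers the essential difficulty back to the source being cited; to be a proof it would need to include the length-transfer theorem or an independent argument for the reverse near-inclusion.
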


Here is a quick overview of the flow of this article.

Our analysis of  the first  question yielded the following two relations:

\noindent {\em {\bf \Cref{enveloping-distance}}.  Let $\mA$ and $ \mB$
  be $C^*$-subalgebras of a $C^*$-algebra $\mC$. Then,
  \[
  d_{0}(\mA,\mB)  = d_0(\mA^{**}, \mB^{**}).\]
}

\noindent {\em {\bf \Cref{enveloping-distance-vNas}}. Let $\mM$ and
  $\mN$ be von Neumann subalgebras of a von Neumann algebra
  $\mL$. Then,
  \[
  d_{KK}(\mM,\mN)  = d_{KK}(\mM^{**}, \mN^{**}).\]
}

And, motivated by the above mentioned observations of Christensen from
\cite{Ch2, Ch4}, we prove the following results in the context of the
second question: \smallskip

 An elementary observation (\Cref{D-unital}) provides the
  following improvement of \cite[Theorem 3.2]{Ch2}: \smallskip

\noindent {\em Let $\mD$ be a commutative unital $C^*$-algebra and,
  $\mA$ and $\mB$ be $C^*$-subalgebras of a $C^*$-algebra $\mC$. Then,
  \( d_0 (\mA \omin \mD, \mB \omin \mD) = d_0 (\mA, \mB)\).} \hfill
(See \Cref{Christensen-D-commutative-unital}.) \smallskip

It turns out that the preceding equality holds with respect to the
  Kadison-Kastler distance as well:\smallskip

\noindent {\em {\bf \Cref{KK-D-commutative}.}  Let $\mD$ be a
  commutative $C^{*}$-algebra and, $\mA$ and $\mB$ be
  $C^*$-subalgebras of a $C^*$-algebra $\mC$. Then,
\[
d_{KK}(\mA\omin \mD, \mB\omin \mD)\leq d_{KK}(\mA, \mB).
\]
Moreover, if $\mD$ is  unital, then
\[
d_{KK}(\mA\omin \mD, \mB\omin \mD)= d_{KK}(\mA, \mB).
\]} 

We conclude the article with the following relations with respect to
the class of scattered $C^*$-algebras: \smallskip

\noindent {\em {\bf \Cref{scattered-relations}.}
Let $\mC$ be a $C^*$-algebra and $\mD$ be a scattered
$C^*$-algebra. Then, for any two $C^*$-subalgebras $\mA$ and $\mB$ of
$\mC$, 
\[
d_{0}(\mA,\mB)\leq d_{0}(\mA \omin \mD, \mB\omin \mD).
\]
In particular, if $\mD$ is commutative as well, then
\[
d_0(\mA, \mB)= d_{0}(\mA \omin \mD, \mB\omin \mD).
\]}

\section{Preliminaries}\label{prelims}

\subsection{Two notions of distance between subalgebras of normed algebras}
For the sake of brevity, let us first fix some notations:

 For a normed algebra $\mC$,  let
\[
\mathrm{Sub}_{\mC} :=\{ \text{subalgebras of } \mC\}; 
\]
\[
C\text{-}\mathrm{Sub}_{\mC}:=\{ \text{closed subalgebras of } \mC\};
\]
and, if $\mC$ is a $C^*$-algebra, then let
\[
C^*\text{-}\mathrm{Sub}_{\mC} :=\{ C^*\text{-subalgebras of } \mC\}.
\]

\subsubsection{Kadison-Kastler distance}
For any normed space $X$, as is standard, its closed unit ball will be
denoted by $B_1(X)$ and for any subset $S$ of $X$ and an element $x
\in X$, the distance between $x$ and $S$ is defined as
\(
d(x, S) = \inf\{ \|x - s\|: s \in S\}.
\) Also, for any $r>0$, $B_r(X):= rB_1(X)$.

Recall from \cite{KK} that the Kadison-Kastler distance between any
two subspaces $\mA$ and $\mB$ of a normed algebra $\mC$ (which we
denote by $d_{KK}(\mA,\mB)$) is defined as the Hausdorff distance
between the closed unit balls of $\mA$ and $\mB$, i.e.,
\[
d_{KK}(\mA,\mB) := \max\left\{\sup_{x \in B_1(\mA)} d(x, B_1(\mB)),
\sup_{z\in B_1(\mB)}d(z, B_1(\mA))\right\}.
\]

\begin{remark}
  Let $\mC$ be a normed algebra.  Then, the following facts are well known:
\begin{enumerate}
\item $d_{KK}(\mA,\mB) \leq 1$ for all $\mA, \mB \in \mathrm{Sub}_{\mC}$.
\item If $\mA, \mB \in C\text{-}\mathrm{Sub}_{\mC}$ and $\mA$ is a proper
  subalgebra of $\mB$, then $d_{KK}(\mA, \mB) =1$. 

  \item $d_{KK}(\mA,\mB) = d_{KK}\left(\overline{\mA}, \mB\right)=
    d_{KK}\left(\overline{\mA}, \overline{\mB}\right)$ for all
    $\mA,\mB \in \Sub_\mC$.
\item If $ \mC$ is a Banach algebra, then  $d_{KK}$ is a metric on
  $C\text{-}\mathrm{Sub}_{\mC}$.
\end{enumerate}
\end{remark}

\subsubsection{Near inclusions and Christensen distance}

 Let $\mC$ be a normed algebra. Recall from \cite{Ch1, Ch2} that, for $\mA,\, \mB
 \in \mathrm{Sub}_\mC$ and a scalar $\gamma > 0$, $\mA \subseteq_{\gamma}
 \mB$ if for each $x \in B_1(\mA)$, there exists a $
 y \in \mB$ such that $\|x-y\| \leq \gamma$; and, the Christensen
 distance between $\mA$ and $\mB$ is defined by
    \begin{equation}\label{d0}
    d_0(\mA,\mB) = \inf\{\gamma > 0 \, : \, \mA \subseteq_{\gamma}
    \mB \text{ and } \mB \subseteq_{\gamma} \mA\}.
    \end{equation}

    Further, $\mA \subset_{\gamma}
\mB$ if there exists a $\gamma_0< \gamma$ such that $\mA \subseteq_{\gamma_{0}}
\mB$.

The following is immediate and quite useful as well.
\begin{lemma}\label{d0-via-strict-near-inclusion}
  Let $\mC$ be a normed algebra. Then,
  \[
  d_0(\mA,\mB) = \inf\{\gamma > 0 \, : \, \mA \subset_{\gamma}
  \mB \text{ and } \mB \subset_{\gamma} \mA\}
  \]
  for all $\mA, \mB \in \Sub_\mC$.
  \end{lemma}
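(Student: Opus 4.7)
The plan is to prove the equality by showing the two corresponding inequalities between the infima of the following sets:
\[
E := \{\gamma > 0 : \mA \subseteq_{\gamma} \mB \text{ and } \mB \subseteq_{\gamma} \mA\}
\quad\text{and}\quad
F := \{\gamma > 0 : \mA \subset_{\gamma} \mB \text{ and } \mB \subset_{\gamma} \mA\},
\]
so that $d_0(\mA,\mB) = \inf E$ by definition, and the claim reduces to $\inf E = \inf F$.

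First I would establish $\inf E \leq \inf F$. Unwinding the definition of $\subset_{\gamma}$, if $\mA \subset_{\gamma} \mB$ holds there is some $\gamma_0 < \gamma$ with $\mA \subseteq_{\gamma_0} \mB$, and clearly this forces $\mA \subseteq_{\gamma} \mB$ (since any $\gamma_0$-approximant is also a $\gamma$-approximant). The same passage applies to the other inclusion, giving $F \subseteq E$ and hence the inequality between infima.

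For the opposite direction, $\inf F \leq \inf E$, I would argue directly from the definition of $\subset_\gamma$: given any $\gamma \in E$ and any $\gamma' > \gamma$, the scalar $\gamma_0 := \gamma$ satisfies $\gamma_0 < \gamma'$ and witnesses both $\mA \subseteq_{\gamma_0}\mB$ and $\mB \subseteq_{\gamma_0}\mA$, so $\gamma' \in F$. Therefore $\inf F \leq \gamma'$ for every $\gamma' > \gamma$, which on letting $\gamma' \downarrow \gamma$ gives $\inf F \leq \gamma$; taking the infimum over $\gamma \in E$ yields $\inf F \leq \inf E$.

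Honestly, there is no real obstacle here — the lemma is a purely formal manipulation of strict versus non-strict inequalities in the definitions of $\subset_\gamma$ and $\subseteq_\gamma$, and the two-line argument above closes it. The only thing to be mindful of is to use $\gamma_0 = \gamma$ (not something smaller) when passing from $E$ to $F$ via a slightly enlarged parameter $\gamma' > \gamma$.
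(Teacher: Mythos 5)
Your proof is correct: both inequalities between $\inf E$ and $\inf F$ are established cleanly, and the key observation (that $\gamma \in E$ forces $\gamma' \in F$ for every $\gamma' > \gamma$, with $\gamma_0 = \gamma$ as the witness) is exactly the right one. The paper states this lemma without proof as ``immediate,'' and your argument is the natural formalization of that claim.
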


\begin{remark}
  Let $\mC$ be a normed algebra. Then, the following useful facts are
  well known:
      \begin{enumerate}
      \item $d_0(\mA, \mB) \leq 1$ for all $\mA, \mB \in \Sub_\mC$.
\item $d_0(\mA,\mB) = d_0\left(\overline{\mA}, \mB\right)= d_0\left(\overline{\mA},
  \overline{\mB}\right)$ for all $\mA,\mB \in \Sub_\mC$.
  \item $d_0$ is not a metric on
    $C\text{-}\mathrm{Sub}_\mC$ (as it  does
    not satisfy the triangle inequality). However, $d_0$ and $d_{KK}$
    are ``equivalent" in the sense that
  \[
  d_0 (\mA,\mB) \leq d_{KK}(\mA,\mB) \leq 2 d_0(\mA,\mB)
  \]
  for all $\mA,\mB \in \mathrm{Sub}_\mC$. 
\item If $\mA,\mB \in \mathrm{Sub}_\mC$ and $\mA$ is a norm closed proper
  subalgebra of $\mB$, then $d_0 (\mA,\mB ) =1$.
      \end{enumerate}
      \end{remark}

\subsection{Universal representation and enveloping von Neumann algebra of a $C^*$-algebra}\( \)

For any $C^*$-algebra $\mC$, let $\mS(\mC)$ denote its state space.
For each $\varphi \in \mS(\mC)$, let $\pi_{\varphi}: \mC \to B(\mH_\varphi)$
denote the GNS representation of $\mC$ associated with $\varphi$; and,
let $\mH_\mC:= \oplus_{\varphi \in \mS(\mC)} \mH_\varphi$. Recall that
the representation $\pi_{\mC} := \oplus_{\varphi \in \mS(\mC)}
\pi_{\varphi}: \mC \to B(\mH_\mC)$ is faithful and is called the
universal representation of $\mC$. Further, $\pi_\mC$ extends to a
surjective linear isometry $\tilde{\pi}_\mC : \mC^{**} \to
\pi_\mC(\mC)''$, which is also a $(w^*,
\sigma\text{-weak})$-homeomorphism; and, via this identification,
$\mC^{**}$ inherits a $W^*$-algebra structure, which is known as the
enveloping von Neumann algebra of $\mC$.

  We shall need the following well-known facts - see, for
  instance, \cite[$\S 3.7$ ]{Ped}.

\begin{proposition}\label{continuous=WOT-continuous}
Let $\mC$ be a $C^*$-algebra. Then, every  continuous linear functional
on $\pi_{\mC}(\mC)$ is W.O.T.-continuous.
\end{proposition}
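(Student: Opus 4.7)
The plan is to reduce the statement to the case of states via the Jordan decomposition and then realize each state as a vector state on the universal Hilbert space. Since $\pi_\mC$ is a faithful representation, it is an isometric $*$-isomorphism onto $\pi_\mC(\mC)$, so pulling back along $\pi_\mC$ gives a bijective correspondence between norm-continuous linear functionals on $\pi_\mC(\mC)$ and norm-continuous linear functionals on $\mC$ itself. It therefore suffices to show that for each $\psi \in \mC^*$, the functional on $\pi_\mC(\mC)$ defined by $\pi_\mC(a) \mapsto \psi(a)$ is W.O.T.-continuous (as a functional on $\pi_\mC(\mC)$ with the W.O.T. inherited from $B(\mH_\mC)$).

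First I would invoke the Jordan decomposition for $C^*$-algebras, which writes any $\psi \in \mC^*$ as a complex linear combination of at most four states of $\mC$. Since the W.O.T.-continuous functionals on $\pi_\mC(\mC)$ form a linear subspace of the dual, it is enough to prove the claim when $\psi = \varphi$ is a state. For such a $\varphi$, the GNS construction yields a cyclic vector $\xi_\varphi \in \mH_\varphi$ with
\[
\varphi(a) = \langle \pi_\varphi(a) \xi_\varphi, \xi_\varphi \rangle \quad \text{for all } a \in \mC.
\]
Viewing $\mH_\varphi$ as a closed subspace of $\mH_\mC = \bigoplus_{\psi \in \mS(\mC)} \mH_\psi$ via the canonical isometric inclusion, and letting $\eta_\varphi \in \mH_\mC$ denote the image of $\xi_\varphi$, we obtain
\[
\varphi(a) = \langle \pi_\mC(a) \eta_\varphi, \eta_\varphi \rangle \quad \text{for all } a \in \mC,
\]
because all other summands of $\pi_\mC$ send $\eta_\varphi$ to zero. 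This exhibits the pullback functional as a vector state on $\pi_\mC(\mC) \subseteq B(\mH_\mC)$, which is manifestly W.O.T.-continuous.

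There is no real obstacle here; the content is packaging the Jordan decomposition together with the defining property of the universal representation. The only small point to verify carefully is that the GNS vector $\xi_\varphi$, when viewed as an element of the direct sum $\mH_\mC$, still implements $\varphi$ through $\pi_\mC$ (and not merely through the summand $\pi_\varphi$); this follows from the block-diagonal form of $\pi_\mC$ on $\mH_\mC$.
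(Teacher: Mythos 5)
Your argument is correct: the reduction via the Jordan decomposition to states, followed by the observation that every state of $\mC$ is implemented as a vector state through the corresponding GNS summand of the universal representation, is precisely the standard proof of this fact. The paper itself states \Cref{continuous=WOT-continuous} without proof, citing \cite[$\S 3.7$]{Ped}, and your write-up is essentially the argument found there, with the one delicate point (that $\xi_\varphi$ still implements $\varphi$ through $\pi_\mC$ on $\mH_\mC$) correctly identified and justified by the block-diagonal action of $\pi_\mC$.
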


  \begin{proposition}\label{universal-restriction}
  Let $\mC$ be a $C^*$-algebra and $\mB$ be a $C^*$-subalgebra of
  $\mC$. Then, the following hold:
  \begin{enumerate}
  \item If $i: \mB \to \mC$ denotes the natural inclusion, then
    $i^{**}$ is an isometric $*$-isomorphism from $\mB^{**}$ onto
    $\overline{J_{\mC}(i(\mB))}^{w^*},$ where $J_\mC$ is the canonical
    linear isometry from $\mC$ into $\mC^{**}$.
    \item There exists an isometric $*$-isomorphism from the enveloping von
      Neumann algebra $\mB^{**}$ (of $\mB$) onto 
      $\overline{\pi_{\mC}(\mB)}^{W.O.T.} $, which maps $i(b)$ to
      $\pi_\mC(b)$ for all $b \in \mB$. Moreover, the $*$-isomorphism is
      also a $(w^*, \sigma\text{-weak})$-homeomorphism.
  \end{enumerate}
\end{proposition}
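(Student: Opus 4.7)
The plan is to derive part (1) from standard Banach space duality and then transfer to part (2) via the universal representation machinery recalled in the preliminaries.

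For part (1), I would first observe that since $i$ is an isometric $*$-homomorphism, its double transpose $i^{**}$ is an isometric $*$-homomorphism from $\mB^{**}$ into $\mC^{**}$ when the biduals are equipped with the Arens product (which, for $C^*$-algebras, agrees with the ambient $W^*$-algebra structure). Next, the defining relation $i^{**} \circ J_\mB = J_\mC \circ i$ shows that the image of $i^{**}$ contains $J_\mC(i(\mB))$. Since $J_\mB(\mB)$ is $w^*$-dense in $\mB^{**}$ by Goldstine's theorem and $i^{**}$ is $(w^*, w^*)$-continuous, this image lies in $\overline{J_\mC(i(\mB))}^{w^*}$. For the reverse inclusion, I would invoke the Krein--\v{S}mulian theorem: the isometry property of $i^{**}$ gives $i^{**}(B_r(\mB^{**})) = i^{**}(\mB^{**}) \cap B_r(\mC^{**})$ for every $r > 0$, and the former is $w^*$-compact by Banach--Alaoglu. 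Hence $i^{**}(\mB^{**})$ is $w^*$-closed in $\mC^{**}$ and therefore equals $\overline{J_\mC(i(\mB))}^{w^*}$.

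For part (2), I would compose the map from part (1) with the isometric $*$-isomorphism $\tilde{\pi}_\mC \colon \mC^{**} \to \pi_\mC(\mC)''$ recalled in the preliminaries, which is a $(w^*, \sigma\text{-weak})$-homeomorphism sending $J_\mC(c)$ to $\pi_\mC(c)$. By part (1) together with the homeomorphism property, this composition is an isometric $*$-isomorphism, and simultaneously a $(w^*, \sigma\text{-weak})$-homeomorphism, from $\mB^{**}$ onto $\overline{\pi_\mC(\mB)}^{\sigma\text{-weak}}$, with the required behaviour $i(b) \mapsto \pi_\mC(b)$. The remaining step is to identify this $\sigma$-weak closure with $\overline{\pi_\mC(\mB)}^{W.O.T.}$; for this I would use \Cref{continuous=WOT-continuous}, or equivalently observe that both closures of a $*$-subalgebra of $B(\mH_\mC)$ coincide with its bicommutant by the double commutant theorem.

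The main obstacle, if any, is merely the bookkeeping required to verify that the $W^*$-algebra structure on $\mB^{**}$ transported through $i^{**}$ agrees with the one it inherits as the enveloping von Neumann algebra of $\mB$; everything else reduces to Goldstine, Krein--\v{S}mulian, and the stated universal representation facts.
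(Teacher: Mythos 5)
Your argument is correct, and it is essentially the standard textbook proof: the paper itself offers no proof of this proposition, stating it as a well-known fact with a pointer to \cite[\S 3.7]{Ped}, so there is no in-paper argument to compare against. The chain Goldstine $\Rightarrow$ image contained in the $w^*$-closure, Krein--\v{S}mulian plus the isometry of $i^{**}$ (which follows because $i^*$ maps $B_1(\mC^*)$ onto $B_1(\mB^*)$ by Hahn--Banach) $\Rightarrow$ image $w^*$-closed, is exactly the right mechanism for (1), and composing with $\tilde{\pi}_\mC$ is the right way to get (2). The only point to be careful about is your parenthetical alternative at the end: $\overline{\pi_\mC(\mB)}^{W.O.T.} = \pi_\mC(\mB)''$ requires $\pi_{\mC}\restriction_{\mB}$ to act nondegenerately on $\mH_\mC$, which can fail (e.g.\ the bicommutant always contains $1_{B(\mH_\mC)}$ while the W.O.T.\ closure need not); your first option --- identifying the $\sigma$-weak and W.O.T.\ closures via \Cref{continuous=WOT-continuous}, or via the standard fact that convex subsets of $B(\mH)$ have the same weak and $\sigma$-weak closures --- is the safe route and suffices.
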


\subsection{Minimal tensor product}

  Let $\mC$ and $\mD$ be $C^*$-algebras and  $\mC \otimes \mD$ denote
  their algebraic tensor product. Then,
  for any $z \in  \mC \otimes \mD$, its  $C^*$-minimal (tensor) norm is
  given by
\begin{eqnarray*}
\lefteqn{  \|z\|_{\min} =}\\ & & \sup \left\{
  \frac{(\varphi \otimes \psi) \Big( ( w^*z^*zw)^{\frac{1}{2}}
    \Big)}{(\varphi\otimes \psi)\Big( (w^*w)^{\frac{1}{2}}\Big)}:
  \varphi \in \mS(\mC), \psi \in \mS(\mD),  w \in \mC \otimes \mD, (\varphi \otimes \psi)(w) \neq 0
  \right\}.
\end{eqnarray*}
The completion of
$\mC \otimes \mD$ with respect to $\|\cdot\|_{\min}$ is a
$C^*$-algebra and is denoted by $\mC \omin \mD$.

\begin{remark}\label{min-facts}
  Let  $\mC$ and $\mD$ be $C^*$-algebras. The following well-known facts will be used ahead:
  \begin{enumerate}
    \item If $\pi: \mC \to B(\mH)$ and $\sigma : \mD \to B(\mK)$ are
      two faithful representations, then
  \[
    \Big\|\sum_i c_i \otimes d_i\Big\|_{\min} = \Big\|\sum_i \pi(c_i) \otimes
    \sigma(d_i)\Big\|
  \]
for all $\sum_i c_i \otimes d_i \in \mC \otimes \mD$.
\item The minimal tensor product is an injective tensor product, i.e.,
  for any $\mA \in C^*\text{-}\Sub_\mC$ and $\mB \in
  C^*\text{-}\Sub_\mD$, there exists a unique isometric
  $*$-isomorphism from $\mA \omin \mB$ onto the $C^*$-subalgebra
  $\overline{\mA \otimes \mB}^{\|\cdot\|_{\min}}$ of $ \mC \omin \mD$
  which fixes $\mA \otimes \mB$.  (This allows us to consider $\mA
  \omin \mB$ as a $C^*$-subalgebra of $\mC \omin \mD$.)
  \end{enumerate}
  \end{remark}

\section{Relations amidst distances between $C^*$-subalgebras and their enveloping von Neumann algebras}

The proof of the following theorem uses  some techniques employed in
the proofs of \cite[Theorem 3.1]{Ch2} and \cite[Lemma 5]{KK}.

\begin{theorem}\label{enveloping-distance}
  Let $\mA$ and $ \mB$ be $C^*$-subalgebras of a $C^*$-algebra
  $\mC$. Then,
  \[
d_{0}(\mA,\mB) =  d_0(\mA^{**}, \mB^{**}) .
\]
\end{theorem}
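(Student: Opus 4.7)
The plan is to prove the two inequalities $d_0(\mA^{**}, \mB^{**}) \leq d_0(\mA,\mB)$ and $d_0(\mA, \mB) \leq d_0(\mA^{**}, \mB^{**})$ separately.

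For the first (the easier direction), I would mimic the argument from \cite[Lemma 5]{KK}. Using \Cref{universal-restriction}(2), identify $\mA^{**}$ and $\mB^{**}$ with $\overline{\pi_\mC(\mA)}^{W.O.T.}$ and $\overline{\pi_\mC(\mB)}^{W.O.T.}$ inside $B(\mH_\mC)$. Fix $\gamma > d_0(\mA,\mB)$, so $\mA \subseteq_\gamma \mB$, and pick $x \in B_1(\mA^{**})$. Kaplansky's density theorem produces a net $(a_\alpha)$ in $B_1(\mA)$ with $\pi_\mC(a_\alpha) \to x$ in W.O.T. Select $b_\alpha \in \mB$ with $\|a_\alpha - b_\alpha\| \leq \gamma$; then $\|b_\alpha\| \leq 1 + \gamma$, and W.O.T.-compactness of the closed ball of radius $1+\gamma$ allows passing to a subnet along which $\pi_\mC(b_\alpha) \to y$ in W.O.T. for some $y \in \mB^{**}$. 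The W.O.T.-lower semicontinuity of the operator norm then forces $\|x - y\| \leq \gamma$. The symmetric argument yields $\mB^{**} \subseteq_\gamma \mA^{**}$, and letting $\gamma \downarrow d_0(\mA,\mB)$ gives the inequality.

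The reverse inequality is the main obstacle. My strategy is to first establish the pointwise distance identity
\[
d(a, \mB)_\mC \; = \; d(J_\mC(a), \mB^{**})_{\mC^{**}} \qquad (a \in \mC),
\]
and then deduce $d_0(\mA, \mB) \leq d_0(\mA^{**}, \mB^{**})$ from it. The inequality $d(J_\mC(a), \mB^{**}) \leq d(a, \mB)$ is immediate since $J_\mC$ is isometric and $J_\mC(\mB) \subseteq \mB^{**}$ by \Cref{universal-restriction}(1). For the reverse, Hahn--Banach duality reduces matters to showing $|\psi(a)| \leq d(J_\mC(a), \mB^{**})$ for every $\psi \in \mC^*$ with $\|\psi\| \leq 1$ and $\psi|_\mB = 0$. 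For such a $\psi$, the $w^*$-continuous functional $\hat\psi \in (\mC^{**})^*$ defined by $\hat\psi(x) = x(\psi)$ satisfies $\|\hat\psi\| = \|\psi\|$ (since $\mC^{**} = (\mC^*)^*$) and $\hat\psi \circ J_\mC = \psi$, so $\hat\psi$ vanishes on $J_\mC(\mB)$; by $w^*$-continuity together with the identification $\mB^{**} = \overline{J_\mC(\mB)}^{w^*}$ provided by \Cref{universal-restriction}(1), $\hat\psi$ vanishes on the whole of $\mB^{**}$. Therefore $|\psi(a)| = |\hat\psi(J_\mC(a))| \leq d(J_\mC(a), \mB^{**})$, establishing the identity.

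With the identity in hand, the remainder is routine. If $\gamma > d_0(\mA^{**}, \mB^{**})$ and $a \in B_1(\mA)$, then $J_\mC(a) \in B_1(\mA^{**})$, so $d(J_\mC(a), \mB^{**}) \leq \gamma$ and hence $d(a, \mB)_\mC \leq \gamma$; an arbitrarily small $\epsilon$-perturbation (or \Cref{d0-via-strict-near-inclusion}) then gives $\mA \subset_{\gamma'} \mB$ for every $\gamma' > \gamma$, and the symmetric statement yields $d_0(\mA,\mB) \leq \gamma$. Letting $\gamma \downarrow d_0(\mA^{**}, \mB^{**})$ completes the proof. The genuinely non-trivial ingredient is the distance identity: because $\mB^{**}$ can be considerably larger than $J_\mC(\mB)$ in $\mC^{**}$, one needs the fact that a functional in $\mC^*$ vanishing on $\mB$ extends, via the canonical predual duality, to a \emph{normal} functional on $\mC^{**}$ that vanishes on the entire $w^*$-closure $\mB^{**}$.
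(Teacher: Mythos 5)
Your proof is correct and follows essentially the same route as the paper's: the inequality $d_0(\mA,\mB)\le d_0(\mA^{**},\mB^{**})$ via a norm-one functional annihilating $\mB$ whose canonical $w^*$-continuous extension annihilates all of $\mB^{**}$, and the reverse inequality via Kaplansky density plus weak compactness of norm-bounded balls. Your two steps are in fact slightly cleaner than the paper's: you package the first direction as the pointwise identity $d(a,\mB)=d(J_\mC(a),\mB^{**})$ in the abstract bidual, and you replace the paper's finite-intersection-property argument by the observation that the operator norm is W.O.T.-lower semicontinuous, so a single W.O.T.-limit point of the approximants already gives $\|x-y\|\le\gamma$ for all vector pairs at once.
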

\begin{proof}
  Note that, by \Cref{universal-restriction}(1), we can identify
  $\mB^{**}$ and $\mA^{**}$ with the $w^*$-closed $*$-subalgebras
  $\overline{J_\mC(\mB)}^{w^*}$ and $\overline{J_\mC(\mA)}^{w^*}$ of
  $\mC^{**}$, respectively. Thus, via these identifications, in view
  of \Cref{universal-restriction}(2), the isometric $*$-isomorphism
  $\tilde{\pi}_{\mC}: \mC^{**} \to \pi_{\mC}(\mC)''$ maps ${\mA^{**}}$
  (resp., $\mB^{**}$) isometrically isomorphically onto
  $\overline{\pi_{\mC}(\mA)}^{W.O.T.}$ (resp.,
  $\overline{\pi_{\mC}(\mB)}^{W.O.T.}$).  So, it suffices to
  show that
  \[
  d_0\left(\pi_\mC(\mA), \pi_\mC(\mB)\right) =
  d_0\left(\overline{\pi_{\mC}(\mA)}^{W.O.T.},
  \overline{\pi_{\mC}(\mB)}^{W.O.T.}\right),
  \]
  which clearly holds if we can show, for any scalar $\gamma > 0$,
  that $\overline{\pi_{\mC}(\mA)}^{W.O.T.}\subset_{\gamma}
  \overline{\pi_{\mC}(\mB)}^{W.O.T.}$ if and only if
  $\pi_{\mC}(\mA)\subset_{\gamma} \pi_{\mC}(\mB)$.\smallskip

First, suppose that $\overline{\pi_{\mC}(\mA)}^{W.O.T.}
\subset_{\gamma}\overline{\pi_{\mC}(\mB)}^{W.O.T.}$.

 By definition, there exists a $\gamma_{0}<\gamma$ such that
 $\overline{\pi_{\mC}(\mA)}^{W.O.T.}
 \subseteq_{\gamma_{0}}\overline{\pi_{\mC}(\mB)}^{W.O.T.}$. Fix a
 scalar $\gamma_1$ satisfying $ \gamma_0 < \gamma_1 < \gamma$.  We
 show that $\pi_{\mC}(\mA)\subseteq_{\gamma_{1}}\pi_{\mC}(\mB)$. This
 will then imply that
 $\pi_{\mC}(\mA)\subset_{\gamma}\pi_{\mC}(\mB)$.\smallskip

 Let $z \in B_{1}(\pi_{\mC}(\mA))$. Without loss of generality, we can
 assume that $z \in B_{1}(\pi_{\mC}(\mA))\setminus
 \pi_{\mC}(\mB)$. Then, by the Hahn-Banach theorem, there exists a
 $\varphi\in \pi_{\mC}(\mC)^{*}$ such that
 $\varphi(\pi_{\mC}(\mB))=(0)$, $\varphi(z)= d(z, \pi_{\mC}(\mB))$ and
 $\|\varphi\|=1$. In view of \Cref{continuous=WOT-continuous},
 $\varphi $ is W.O.T.-continuous and hence $\sigma$-weakly continuous.
 Thus, by the Hahn-Banach theorem (for the $\sigma$-weak topology), we
 can extend $\varphi$ uniquely to a $\sigma$-weakly continuous linear
 functional $\tilde{\varphi}: \pi_{\mC}(\mC)^{''}\rightarrow
 \mathbb{C}$. Clearly,
 ${\tilde{\varphi}}_{\restriction_{\overline{\pi_{\mC}(\mB)}^{W.O.T.}}}=0$
 and  an appeal to the Kaplansky's density
 theorem shows that $\|\tilde{\varphi}\|=1$, as well.  We assert that \,
 $\|{\tilde{\varphi}}_{\restriction_{\overline{\pi_{\mC}(\mA)}^{W.O.T.}}}\|\leq
 \gamma_{0}$.

  Let $x\in B_{1}(\overline{\pi_{\mC}(\mA)}^{W.O.T.})$. Since
  $\overline{\pi_{\mC}(\mA)}^{W.O.T.}  \subseteq_{\gamma_0}
  \overline{\pi_{\mC}(\mB)}^{W.O.T.}$, there exists a $y\in
  \overline{\pi_{\mC}(\mB)}^{W.O.T.}$ such that $\|x-y\|\leq
  \gamma_{0}$. Thus,
\[
|\tilde{\varphi}(x)|= |\tilde{\varphi}(x-y)|\leq \|x-y\|\leq \gamma_{0};
\]
so that, $\|\tilde{\varphi}_{\restriction_{\overline{\pi_{\mC}(\mA)}^{W.O.T.}}}\|\leq
\gamma_{0}$; as was asserted.

Thus, \( d(z, \pi_{\mC}(\mB)) = |\varphi(z)|= |\tilde{\varphi
}(z)|\leq \gamma_{0} <\gamma_{1}.  \) So, there exists a $w \in
\pi_{\mC}(\mB)$ such that $\|z - w\|\leq \gamma_{1}<\gamma$.  Thus,
$\pi_{\mC}(\mA)\subseteq_{\gamma_1}\pi_{\mC}(\mB)$ and, hence,
$\pi_{\mC}(\mA)\subset_{\gamma}\pi_{\mC}(\mB)$.\smallskip
 
 Conversely, suppose that  $\pi_{\mC}(\mA) \subset_{\gamma}\pi_{\mC}(\mB)$.

 Fix a $\beta< \gamma$ such that
 $\pi_{\mC}(\mA)\subset_{\beta}\pi_{\mC}(\mB)$. We shall show that
 $\overline{\pi_{\mC}(\mA)}^{W.O.T.}
 \subseteq_{\beta}\overline{\pi_{\mC}(\mB)}^{W.O.T.}$, as well.

 Let $x\in B_{1}(\overline{\pi_{\mC}(\mA)}^{W.O.T.}) $. It suffices to
 show that there exists a $z \in \overline{\pi_\mC(\mB)}^{W.O.T.}$
 such that $|\langle (x - z) \zeta, \eta \rangle| \leq \beta$ for all
 $\zeta, \eta \in \mH$. Towards this end, let $\delta = 1+ \gamma$ and,
 for every ordered pair $(\zeta, \eta)\in B_1(\mcal{H})\times
 B_1(\mH)$, consider
\[
S_{\zeta,\eta}=\left\{y\in
B_{\delta}\left(\overline{\pi_{\mC}(\mB)}^{W.O.T.}\right) : \,
|\left\langle(x-y)\zeta, \eta \right\rangle| \leq \beta \right\}.
\]

\textbf{Claim (1):} $S_{\zeta,\eta}$ is non-empty and W.O.T. closed
in $B_{\delta}\left(\overline{\pi_{\mC}(\mB)}^{W.O.T.}\right)$.\smallskip

Clearly, $S_{\zeta,\eta}$ is W.O.T.-closed in
$B_{\delta}\left(\overline{\pi_{\mC}(\mB)}^{W.O.T.}\right)$. We just need to show
that it is non-empty. By the Kaplansky's density theorem, there exists
a net $\{x_{\alpha}\}\subset B_{1}(\pi_{\mC}(\mA))$ such that
$x_{\alpha}\rightarrow x$ in the weak operator topology.  Since
$\pi_{\mC}(\mA)\subset_{\beta}\pi_{\mC}(\mB)$, for each $\alpha$,
there exists a $y_{\alpha}\in \pi_{\mC}(\mB)$ such that
$\|x_{\alpha}-y_{\alpha}\|< \beta$. Clearly, $\|y_{\alpha}\|<\delta$
for all $\alpha$.

  Note that 
  $B_{\delta }\left(\overline{\pi_{\mC}(\mB)}^{W.O.T.}\right)$ is
  W.O.T.-compact, by \Cref{universal-restriction}(2); so, there exists a
  W.O.T.-convergent subnet $\{y_{\alpha_{j}}\}$ of $\{y_{\alpha}\}$. Let $y =
 \lim_{j} y_{\alpha_{j}}$ in  the weak operator topology. Since
  $|\left\langle(x_{\alpha}-y_{\alpha})\zeta, \eta \right\rangle| \leq
  \beta$ for every $\alpha$, it follows that 
\[
|\left\langle(x-y)\zeta, \eta \right\rangle| =
\lim_{j}|\left\langle(x_{\alpha_{j}}-y_{\alpha_{j}})\zeta, \eta
\right\rangle| \leq \beta,
\]
i.e., $y\in S_{\zeta,\eta}$. Thus, $S_{\zeta,\eta}\neq \emptyset.$

\medskip

\textbf{Claim (2):} The collection $\{S_{\zeta,\eta} : (\zeta, \eta)\in
  B_{1}(\mcal{H})\times B_{1}(\mcal{H})\}$ satisfies the finite
intersection property in
$B_{\delta}\left(\overline{\pi_{\mC}(\mB)}^{W.O.T.}\right)$.\smallskip

Let $\{(\zeta_{1}\times \eta_{1}),...,(\zeta_{n}\times \eta_{n})\} \subset
B_{1}(\mcal{H})\times B_{1}(\mcal{H})$. Fix a  $0 < \lambda < \beta$ such
that $\pi_{\mC}(\mA)\subset_{\lambda}\pi_{\mC}(\mB)$.

Since $x \in B_1\left(\overline{\pi_{\mC}(\mA)}^{W.O.T.}\right)$, by
the Kaplansky's density theorem again, there exists a $z \in
B_{1}(\pi_{\mC}(\mA))$ such that $|\left\langle(z - x)\zeta_{i},
\eta_{i} \right\rangle|< \beta-\lambda$ for all $ 1 \leq i\leq
n$. Further, by the choice of $\lambda$, there exists a $ w \in
\pi_{\mC}(\mB)$ such that $\|z - w\|< \lambda$. This implies that
$|\left\langle(z - w)\zeta_{i}, \eta_{i} \right\rangle|< \lambda$ for
all $1 \leq i\leq n$. Thus,
\[
|\left\langle(x - w )\zeta_{i}, \eta_{i} \right\rangle|\leq
|\left\langle(x-z)\zeta_{i}, \eta_{i} \right\rangle|+
|\left\langle(z-w)\zeta_{i}, \eta_{i} \right\rangle|< \beta\,\,\text{
  for all }\,\, 1\leq i\leq n,
\]
i.e.,  $w \in \cap_{i=1}^{n}S_{\zeta_{i},\eta_{i}}$. This
proves Claim (2). \medskip

Thus,  $B_{\delta}\left(\overline{\pi_{\mC}(\mB)}^{W.O.T.}\right)$ being
W.O.T.-compact, there is a $z\in \cap \{ S_{\zeta,\eta} :
(\zeta,\eta)\in B_{1}(\mcal{H})\times B_{1}(\mcal{H})\}$. In
particular, $z \in \overline{\pi_\mC(\mB)}^{W.O.T.}$ and
$|\left\langle(x-z)\zeta, \eta \right\rangle|\leq \beta< \gamma$ for
all $(\zeta,\eta)\in B_{1}(\mcal{H})\times B_{1}(\mcal{H})$, as was desired.
\end{proof}
At this moment, it is not clear to us whether the preceding equality
holds with respect to the Kadison-Kastler distance or not. However, in
view of \Cref{universal-restriction}, the following comparison can be
deduced easily from \cite[Lemma 5]{KK}:

\begin{proposition}\cite[Lemma 5]{KK}\label{KK-inequality}
  Let $\mA$ and $\mB$ be $C^*$-subalgebras of a $C^*$-algebra $\mC$. Then,
  \[
d_{KK}(\mA^{**}, \mB^{**}) \leq d_{KK}(\mA,\mB).
  \]
  \end{proposition}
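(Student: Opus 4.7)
The plan is to reduce the statement to Kadison and Kastler's original \cite[Lemma 5]{KK} via the universal representation, using the machinery already set up in \Cref{universal-restriction}.

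First, since the universal representation $\pi_\mC: \mC \to B(\mH_\mC)$ is an isometric $*$-isomorphism onto its image, the closed unit balls of $\mA$ and $\mB$ map bijectively and isometrically onto those of $\pi_\mC(\mA)$ and $\pi_\mC(\mB)$. Hence
\[
d_{KK}(\mA, \mB) = d_{KK}\bigl(\pi_\mC(\mA), \pi_\mC(\mB)\bigr).
\]

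Next, I would invoke \Cref{universal-restriction}(2), which provides isometric $*$-isomorphisms $\mA^{**} \cong \overline{\pi_\mC(\mA)}^{W.O.T.}$ and $\mB^{**} \cong \overline{\pi_\mC(\mB)}^{W.O.T.}$, realized simultaneously via the restriction of the global isometric $*$-isomorphism $\tilde{\pi}_\mC: \mC^{**} \to \pi_\mC(\mC)''$. Since $\tilde{\pi}_\mC$ sends the closed unit ball of $\mA^{**}$ (viewed inside $\mC^{**}$) onto that of $\overline{\pi_\mC(\mA)}^{W.O.T.}$, and similarly for $\mB^{**}$, we obtain
\[
d_{KK}(\mA^{**}, \mB^{**}) = d_{KK}\bigl(\overline{\pi_\mC(\mA)}^{W.O.T.}, \overline{\pi_\mC(\mB)}^{W.O.T.}\bigr).
\]

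Finally, since $\pi_\mC(\mA)$ and $\pi_\mC(\mB)$ are convex $*$-subalgebras of $B(\mH_\mC)$, their WOT and SOT closures coincide. Thus the right-hand side above equals $d_{KK}\bigl(\overline{\pi_\mC(\mA)}^{S.O.T.}, \overline{\pi_\mC(\mB)}^{S.O.T.}\bigr)$, and a direct application of \cite[Lemma 5]{KK} to the pair $\pi_\mC(\mA), \pi_\mC(\mB) \subseteq B(\mH_\mC)$ (which asserts that $d_{KK}(\overline{\mA}^{S.O.T.}, \overline{\mB}^{S.O.T.}) < \gamma$ whenever $d_{KK}(\mA,\mB) < \gamma$, i.e., the corresponding ``$\leq$'' inequality) yields
\[
d_{KK}\bigl(\overline{\pi_\mC(\mA)}^{S.O.T.}, \overline{\pi_\mC(\mB)}^{S.O.T.}\bigr) \leq d_{KK}\bigl(\pi_\mC(\mA), \pi_\mC(\mB)\bigr).
\]
Chaining the three (in)equalities gives the desired estimate. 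There is no genuine obstacle here: every ingredient (faithful universal representation, identification of the enveloping von Neumann algebra with the WOT-closure inside $\pi_\mC(\mC)''$, and the KK Lemma 5) is already available; the proof is essentially a bookkeeping argument tracking unit balls through the canonical identifications.
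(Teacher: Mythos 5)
Your proposal is correct and is precisely the deduction the paper intends: identify $\mA^{**}$ and $\mB^{**}$ with the W.O.T.-closures of $\pi_\mC(\mA)$ and $\pi_\mC(\mB)$ via the single ambient isometry $\tilde{\pi}_\mC$ from \Cref{universal-restriction}, note that these closures agree with the S.O.T.-closures by convexity, and then apply \cite[Lemma 5]{KK}. The paper gives no further detail beyond citing these two ingredients, so your bookkeeping of the unit balls through the identifications is exactly the argument it leaves to the reader.
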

When we restrict to the category of von Neumann algebras, we observe
that even the Kadison-Kastler distance between von Neumann subalgebras
and their biduals are equal. In order to see this, we need the
following well-known result - see, for instance,
\cite[III.5.2.15]{BL}.

\begin{proposition}\label{vNa-enveloping-ce}
Let $\mL$ be a von Neumann algebra. Then, there exists a surjective
normal $*$-homomorphism $E_\mL: \mL^{**} \to \mL$ that fixes $\mL$
(where $\mL$ is identified with its canonical isometric embedding in
$\mL^{**}$).

In particular, for any von Neumann subalgebra $\mM$ of $\mL$, the
restriction map $E_\mM:={E_\mL}_{\restriction_{\mM^{**}}}$  maps $\mM^{**}$ onto $\mM$.
\end{proposition}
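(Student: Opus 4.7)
The plan is to construct $E_\mL$ using the universal property of the bidual applied to the defining representation of $\mL$, and then deduce the restriction statement from $\sigma$-weak continuity.

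\textbf{Step 1: Construction of $E_\mL$.} Realize $\mL$ concretely as a von Neumann subalgebra of $B(\mH)$ for some Hilbert space $\mH$ via the defining inclusion $\iota : \mL \hookrightarrow B(\mH)$. Viewing $\iota$ as a $*$-homomorphism of a $C^*$-algebra into $B(\mH)$, I would invoke the universal property of the enveloping von Neumann algebra (which is precisely the content underlying \Cref{universal-restriction}(2) applied with $\mB = \mC = \mL$): the map $\iota$ extends uniquely to a $\sigma$-weakly continuous $*$-homomorphism $\tilde{\iota} : \mL^{**} \to B(\mH)$. Since $\tilde{\iota}$ agrees with $\iota$ on the $w^*$-dense subset $J_\mL(\mL)$ and is normal, its image is contained in the $\sigma$-weak closure of $\iota(\mL)$, which equals $\iota(\mL)'' = \iota(\mL) \cong \mL$ because $\mL$ is already a von Neumann algebra. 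Thus $E_\mL := \tilde{\iota}$ is a surjective normal $*$-homomorphism $\mL^{**} \to \mL$ that fixes $\mL$ by construction.

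\textbf{Step 2: The restriction to $\mM^{**}$.} By \Cref{universal-restriction}(1), I would identify $\mM^{**}$ with the $w^*$-closed $*$-subalgebra $\overline{J_\mL(\mM)}^{w^*}$ of $\mL^{**}$. Since $E_\mL$ is normal (i.e., $\sigma$-weakly continuous) and sends $J_\mL(\mM)$ bijectively onto $\mM$, continuity gives
\[
E_\mL\bigl(\overline{J_\mL(\mM)}^{w^*}\bigr) \subseteq \overline{E_\mL(J_\mL(\mM))}^{\sigma\text{-weak}} = \overline{\mM}^{\sigma\text{-weak}} = \mM,
\]
where the last equality uses that $\mM$, being a von Neumann subalgebra of $\mL$, is $\sigma$-weakly closed in $\mL$. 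The reverse inclusion is immediate from $E_\mL|_{J_\mL(\mM)} = \mathrm{id}_\mM$, so $E_\mM$ maps $\mM^{**}$ onto $\mM$.

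\textbf{Main obstacle.} The only non-routine point is justifying that $\iota$ extends to a $*$-homomorphism $\tilde{\iota}$ on the whole bidual (not merely to a bounded linear map). This hinges on the universal-property machinery already developed in Section 2: the identification of $\mL^{**}$ with $\pi_\mL(\mL)''$ via \Cref{universal-restriction}(2), together with the standard fact that on bounded sets the $*$-algebra operations are separately $\sigma$-weakly continuous, allows one to transfer multiplicativity and the $*$-operation from the $w^*$-dense image $J_\mL(\mL)$ to all of $\mL^{**}$ using Kaplansky density. Once multiplicativity is in hand, surjectivity and the fixing property are trivial, and the restriction statement follows by soft $\sigma$-weak-continuity arguments as in Step 2.
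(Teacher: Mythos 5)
Your proof is correct: the paper itself gives no argument for this proposition, simply citing \cite[III.5.2.15]{BL}, and your construction of $E_\mL$ as the normal extension of the identity representation $\iota:\mL\hookrightarrow B(\mH)$ to $\mL^{**}$, with image landing in $\overline{\iota(\mL)}^{\sigma\text{-weak}}=\mL''=\mL$, is precisely the standard proof behind that citation. Step 2 likewise goes through by $w^*$-continuity of $E_\mL$ together with the identification of $\mM^{**}$ with $\overline{J_\mL(\mM)}^{w^*}$ from \Cref{universal-restriction}(1), so nothing is missing.
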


\begin{theorem}\label{enveloping-distance-vNas}
Let $\mM$ and $\mN$ be von Neumann subalgebras of a von Neumann
algebra $\mL$. Then,
  \[
d_{KK}(\mM,\mN) =  d_{KK}(\mM^{**}, \mN^{**}) .
\]
\end{theorem}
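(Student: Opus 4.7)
The plan is to dispatch one inequality by invoking \Cref{KK-inequality} and then to prove the reverse inequality by exploiting the normal surjection $E_\mL:\mL^{**}\to\mL$ supplied by \Cref{vNa-enveloping-ce}. Since $\mM$ and $\mN$ are, in particular, $C^*$-subalgebras of the $C^*$-algebra $\mL$, \Cref{KK-inequality} immediately yields $d_{KK}(\mM^{**}, \mN^{**}) \leq d_{KK}(\mM, \mN)$, so I would only need to produce the reverse.

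For the reverse inequality, I would identify $\mM^{**}$ and $\mN^{**}$ with their canonical $*$-isomorphic copies inside $\mL^{**}$ via \Cref{universal-restriction}(1). By \Cref{vNa-enveloping-ce}, $E_\mL$ is a normal $*$-homomorphism, so it is contractive, fixes every element of $\mL$, and its restrictions send $\mM^{**}$ onto $\mM$ and $\mN^{**}$ onto $\mN$. Now, fixing $\gamma > d_{KK}(\mM^{**}, \mN^{**})$ and $x \in B_1(\mM)$, I would view $x$ inside $B_1(\mM^{**})$ and pick $y \in B_1(\mN^{**})$ with $\|x-y\| \leq \gamma$. Then $E_\mL(y) \in B_1(\mN)$, and
\[
\|x - E_\mL(y)\| \;=\; \|E_\mL(x-y)\| \;\leq\; \|x-y\| \;\leq\; \gamma,
\]
because $E_\mL(x)=x$ and $E_\mL$ is a contraction. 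A symmetric argument handles the other direction, and letting $\gamma \downarrow d_{KK}(\mM^{**}, \mN^{**})$ delivers the desired reverse inequality.

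I expect the only real obstacle to be bookkeeping: one must check coherently that, under the embeddings of \Cref{universal-restriction}(1), the element $x \in \mM$ embedded into $\mM^{**} \subseteq \mL^{**}$ really coincides with the canonical image $J_\mL(x) \in \mL^{**}$, so that $E_\mL$ does fix it. The substantive content is simply that the von Neumann setting affords a contractive, identity-fixing normal lift $\mL^{**} \twoheadrightarrow \mL$; it is precisely the absence of such a lift in the general $C^*$-setting that presumably explains why the authors were unable to prove the analogous equality for arbitrary $C^*$-subalgebras.
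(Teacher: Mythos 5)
Your proposal is correct and follows essentially the same route as the paper: one inequality from \Cref{KK-inequality}, and the reverse by using the contractive normal surjection $E_\mL:\mL^{**}\to\mL$ of \Cref{vNa-enveloping-ce} to map an approximant $y\in B_1(\mN^{**})$ of $x\in B_1(\mM)$ down to $E_\mL(y)\in B_1(\mN)$ without increasing the distance. The bookkeeping point you flag (that $x\in\mM$ is fixed by $E_\mL$ under the canonical embeddings) is exactly the identification the paper also relies on.
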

\begin{proof}
In view of \Cref{KK-inequality}, it just remains to show that
$d_{KK}(\mM,\mN)\leq d_{KK}(\mM^{**},\mN^{**})$.

Let $\epsilon>0$ and $x\in B_{1}(\mM)\subset B_{1}(\mM^{**})$. Then,
there exists a $y\in B_{1}(\mN^{**})$ such that \( \|x-y\|<
d_{KK}(\mM^{**},\mN^{**})+ \epsilon$, since $d(x, B_{1}(\mN^{**}))\leq
d_{KK}(\mM^{**},\mN^{**})$.

Consider the surjective normal $*$-homomorphisms $E_\mL: \mL^{**} \to
\mL$ and its restriction $E_\mN: \mN^{**} \to \mN$ as in
\Cref{vNa-enveloping-ce}. Let $z= E_{\mN}(y)\in B_{1}(\mN)$. Since
$E_\mL(x) =x $ and $E_\mL(y) = E_\mN(y) = z$, we observe that
\[
\|x-z\|=\|E_{\mL}(x-y)\|\leq \|x-y\|< d_{KK}(\mM^{**},\mN^{**})+ \epsilon;
\]
so that, 
\[
d(x,B_{1}(\mN))< d_{KK}(\mM^{**},\mN^{**})+ \epsilon. 
\]
Thus, 
\[
\sup_{x\in B_{1}(\mM)}d(x,B_{1}(\mN))\leq d_{KK}(\mM^{**},\mN^{**})+ \epsilon.
\]
By symmetry, 
\[
\sup_{y\in B_{1}(\mN)}d(y,B_{1}(\mM))\leq d_{KK}(\mM^{**},\mN^{**})+ \epsilon
\]
as well. Hence, 
\[
d_{KK}(\mM,\mN)\leq d_{KK}(\mM^{**},\mN^{**})+ \epsilon.
\]
Since $\epsilon>0$ was arbitrary, we get
\[
d_{KK}(\mM,\mN)\leq d_{KK}(\mM^{**},\mN^{**})
\]
and we are done. \end{proof}

\section{Kadison-Kastler and Christensen distance between tensor product subalgebras}

\begin{proposition}\label{D-ce-P}
Let $\mC$ and $\mD$ be $C^*$-algebras; $\mA, \mB\in
C^*\text{-}\Sub_\mC$ and $\mP\in C^*\text{-}\Sub_\mD$. If there exists
a conditional expectation from $\mD$ onto $\mP$, then
\[
d_{0}(\mA \omin \mP, \mB\omin \mP)\leq d_{0}(\mA \omin \mD, \mB\omin \mD)
\]
and 
\[
d_{KK}(\mA \omin \mP, \mB\omin \mP)\leq d_{KK}(\mA \omin \mD, \mB\omin \mD).
\]
\end{proposition}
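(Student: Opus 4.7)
The plan is to manufacture a canonical contractive projection from $\mC \omin \mD$ onto $\mC \omin \mP$ out of the given conditional expectation $E : \mD \to \mP$, and to transport near inclusions through it. Concretely, since $E$ is completely positive of norm one, the algebraic map $\mathrm{id}_\mC \otimes E$ extends uniquely to a completely contractive linear map $F : \mC \omin \mD \to \mC \omin \mP$ (with $\mC \omin \mP$ viewed inside $\mC \omin \mD$ via \Cref{min-facts}(2)). Because $F$ acts tensorandwise and $E$ restricts to the identity on $\mP$, the map $F$ carries $\mA \omin \mD$ (resp.\ $\mB \omin \mD$) into $\mA \omin \mP$ (resp.\ $\mB \omin \mP$) and fixes $\mA \omin \mP$ and $\mB \omin \mP$ pointwise.

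For the Christensen estimate, fix $\gamma > d_0(\mA \omin \mD, \mB \omin \mD)$ so that $\mA \omin \mD \subseteq_\gamma \mB \omin \mD$ and $\mB \omin \mD \subseteq_\gamma \mA \omin \mD$. Given any $x \in B_1(\mA \omin \mP) \subseteq B_1(\mA \omin \mD)$, choose $y \in \mB \omin \mD$ with $\|x - y\| \leq \gamma$. Then $F(y) \in \mB \omin \mP$, and since $F(x) = x$,
\[
\|x - F(y)\| = \|F(x - y)\| \leq \|x - y\| \leq \gamma.
\]
Hence $\mA \omin \mP \subseteq_\gamma \mB \omin \mP$; the reverse near inclusion follows by symmetry. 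Taking the infimum over such $\gamma$ yields the desired $d_0$ inequality.

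The $d_{KK}$ inequality follows from the same transfer principle, now tracking closed unit balls. For $\epsilon > 0$ and $x \in B_1(\mA \omin \mP) \subseteq B_1(\mA \omin \mD)$, the definition of the Kadison-Kastler distance supplies $y \in B_1(\mB \omin \mD)$ with $\|x - y\| \leq d_{KK}(\mA \omin \mD, \mB \omin \mD) + \epsilon$; contractivity of $F$ then produces $F(y) \in B_1(\mB \omin \mP)$ with $\|x - F(y)\|$ bounded by the same quantity. Taking the supremum over $x$, letting $\epsilon \to 0$, and invoking the symmetric argument closes the proof. The only non-elementary ingredient is the existence and complete contractivity of the slice map $F = \mathrm{id}_\mC \otimes E$ on the minimal tensor product, which is the standard fact that the tensor product of completely positive maps extends (via the minimal norm) to a completely positive map of norm at most the product of the factor norms; I do not foresee any substantive obstacle beyond invoking this.
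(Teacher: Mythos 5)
Your proposal is correct and follows essentially the same route as the paper: both construct the contractive map $\mathrm{id}_\mC \omin E$ on $\mC \omin \mD$, note that it maps $\mB \omin \mD$ into $\mB \omin \mP$ while fixing $\mA \omin \mP$ and $\mB \omin \mP$, and push near inclusions (resp.\ unit-ball approximants) through it. The only non-elementary ingredient you flag --- that $\mathrm{id}_\mC \otimes E$ extends to a (completely) contractive map on the minimal tensor product --- is exactly the standard fact the paper also relies on.
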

\begin{proof}
Let $E: \mD \to \mP$ be a conditional expectation. Let $\epsilon>0$
and fix a $\gamma_{0}>0$ such that
\[
d_{0}(\mA \omin \mD, \mB\omin \mD)<  \gamma_{0} < d_{0}(\mA \omin
\mD, \mB\omin \mD) + \epsilon.
\]
This implies that $\mA \omin \mD\subseteq_{\gamma_{0}}\mB\omin \mD$
and $\mB \omin \mD\subseteq_{\gamma_{0}}\mA\omin \mD$.

Let $x\in B_{1}(\mA \omin \mP)\subseteq B_{1}(\mA \omin \mD)$. Then,
there exists a $y\in \mB\omin \mD$ such that $\|x-y\|_{\min}\leq
\gamma_{0}$. Consider the conditional expectation
$\mathrm{id}_{\mC}\omin E: \mC \omin \mD \to \mC \omin \mP$. Clearly,
it maps $\mB\omin \mD$ onto $\mB\omin \mP$. Thus, $y_{0}:=
(\mathrm{id}_{\mC}\omin E)(y)\in \mB\omin \mP$ and
\begin{eqnarray*}
\|x-y_{0}\|_{\min} &=& \|(\mathrm{id}_{\mC}\omin E)(x-y)\|_{\min}\\
&\leq & \|x-y\|_{\min}\leq \gamma_{0}.
\end{eqnarray*}
So, $\mA \omin \mP\subseteq_{\gamma_{0}}\mB\omin
\mP$. Similarly,  $\mB \omin \mP\subseteq_{\gamma_{0}}\mA\omin
\mP$; so that 
\[
d_{0}(\mA \omin \mP, \mB\omin \mP)\leq \gamma_{0}< d_{0}(\mA \omin
\mD, \mB\omin \mD)+ \epsilon.
\]
Hence,
\[
d_{0}(\mA \omin \mP, \mB\omin \mP) \leq d_{0}(\mA \omin \mD, \mB\omin \mD).
\] 

Since $\mathrm{id}_\mC \omin E$ is a contraction, the proof for the
Kadison-Kastler distance follows verbatim and we leave the details to
the reader.
\end{proof}

Since every state on a unital $C^*$-algebra is a conditional
expectation onto $\C$, we immediately deduce the following:

\begin{cor}\label{KK-n-d0-inequality}\label{D-unital}
  Let $\mC $ and $\mD $ be $C^*$-algebras. If $\mD$ is unital, then
\[
d_{KK}(\mA, \mB)\leq d_{KK}(\mA \omin \mD, \mB\omin \mD)
\]
 and \[
d_{0}(\mA, \mB)\leq d_{0}(\mA \omin \mD, \mB\omin \mD)
\]
for all $\mA\,,\mB \in C^*\text{-}\Sub_\mC$.
\end{cor}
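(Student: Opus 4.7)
The plan is to apply \Cref{D-ce-P} to the smallest possible unital subalgebra of $\mD$, namely $\mathbb{C}\cdot 1_\mD$, and then identify the resulting tensor products with $\mA$ and $\mB$ themselves. The entire argument rests on two observations: first, that a state on a unital $C^*$-algebra is a (unital) conditional expectation onto its scalar subalgebra, and second, that $\mA \omin (\mathbb{C}\cdot 1_\mD)$ is canonically $*$-isomorphic (isometrically) to $\mA$ inside $\mC \omin \mD$.

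First I would let $\varphi$ be any state on $\mD$ (which exists by the Hahn–Banach theorem since $\mD$ is unital). The map $\varphi(\cdot)\,1_\mD : \mD \to \mathbb{C}\cdot 1_\mD$ is then a conditional expectation from $\mD$ onto the unital $C^*$-subalgebra $\mP := \mathbb{C}\cdot 1_\mD$, because $\varphi(1_\mD) = 1$ and this map is a norm-one projection onto $\mP$. Hence the hypothesis of \Cref{D-ce-P} is satisfied with this choice of $\mP$, yielding
\[
d_{0}(\mA \omin \mP, \mB\omin \mP) \leq d_{0}(\mA \omin \mD, \mB\omin \mD)
\]
and the analogous inequality for $d_{KK}$.

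Next I would identify $\mA \omin \mP$ with $\mA$ (and likewise $\mB \omin \mP$ with $\mB$) via the canonical isometric $*$-isomorphism $a \mapsto a \otimes 1_\mD$, which sits inside $\mC \omin \mD$ thanks to \Cref{min-facts}(2). Under this identification the inequalities above become exactly
\[
d_{0}(\mA, \mB) \leq d_{0}(\mA \omin \mD, \mB\omin \mD) \qquad \text{and} \qquad d_{KK}(\mA, \mB) \leq d_{KK}(\mA \omin \mD, \mB\omin \mD),
\]
which is what was claimed.

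There is no real obstacle here — the corollary is essentially a formal consequence of \Cref{D-ce-P} once one recognises that unitality of $\mD$ is precisely what produces a conditional expectation onto a copy of $\mathbb{C}$ inside $\mD$ (namely a state). The only mild point to be careful about is that the identification $\mA \omin \mathbb{C} \cong \mA$ must be taken inside $\mC \omin \mD$ (not merely as abstract $C^*$-algebras), but this is guaranteed by the injectivity of the minimal tensor norm recorded in \Cref{min-facts}.
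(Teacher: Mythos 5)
Your proof is correct and is exactly the argument the paper intends: the paper deduces the corollary from \Cref{D-ce-P} by observing that a state on a unital $\mD$ is a conditional expectation onto $\mathbb{C}\cdot 1_\mD$, and you have simply spelled out the remaining identification $\mA \omin (\mathbb{C}\cdot 1_\mD) \cong \mA$ via the isometric embedding $a \mapsto a \otimes 1_\mD$, which indeed preserves both distances. No differences worth noting.
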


The following well-known fact about the so-called `left slice maps'
will be useful ahead:
\begin{lemma}\label{slice-map}
  Let $\mC$ and $ \mD$ be two $C^*$-algebras and $\varphi \in
  \mD^*$. Then, there exists a unique (left slice map) $L_{\varphi}
  \in B(\mC \omin \mD, \mC)$ such that $\|L_\varphi\|=\|\varphi\|$ and
  \( L_{\varphi} (c \otimes d) = c \varphi(d) \) for all $c \in \mC$
  and $ d \in \mD$.
\end{lemma}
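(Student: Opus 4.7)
The plan is to construct $L_\varphi$ first on the algebraic tensor product $\mC \otimes \mD$, establish the norm bound, and then extend by continuity. By the universal property of the algebraic tensor product, the bilinear map $(c, d) \mapsto c \varphi(d)$ induces a unique linear map $L_\varphi : \mC \otimes \mD \to \mC$ satisfying $L_\varphi(c \otimes d) = c \varphi(d)$. The uniqueness in the statement is then immediate once boundedness is established, since any two bounded linear maps agreeing on the dense subspace $\mC \otimes \mD \subset \mC \omin \mD$ must coincide. The lower bound $\|L_\varphi\| \geq \|\varphi\|$ follows easily by testing on elementary tensors $c \otimes d$ with $\|c\|_\mC = 1$ and $|\varphi(d)|$ close to $\|\varphi\|$.

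The main obstacle is the upper bound $\|L_\varphi(z)\|_\mC \leq \|\varphi\| \cdot \|z\|_{\min}$ for $z \in \mC \otimes \mD$. I would pass to the dual: for any $z \in \mC \otimes \mD$,
\[
\|L_\varphi(z)\|_\mC = \sup_{\omega \in B_1(\mC^*)} |\omega(L_\varphi(z))| = \sup_{\omega \in B_1(\mC^*)} |(\omega \otimes \varphi)(z)|,
\]
where $\omega \otimes \varphi$ denotes the natural functional on $\mC \otimes \mD$ defined by $(\omega \otimes \varphi)(c \otimes d) = \omega(c)\varphi(d)$. Thus it suffices to prove the tensor inequality $|(\omega \otimes \varphi)(z)| \leq \|\omega\| \|\varphi\| \|z\|_{\min}$ for all $\omega \in \mC^*$ and $\varphi \in \mD^*$.

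I would first handle the case where both $\omega$ and $\varphi$ are positive. Using their GNS triples $(\pi_\omega, \mH_\omega, \xi_\omega)$ and $(\pi_\varphi, \mH_\varphi, \xi_\varphi)$ (with $\|\xi_\omega\|^2 = \|\omega\|$ and $\|\xi_\varphi\|^2 = \|\varphi\|$), the external tensor product $*$-representation $\pi_\omega \otimes \pi_\varphi$ extends to a contractive $*$-homomorphism on $\mC \omin \mD$ (since the minimal tensor norm is the smallest $C^*$-cross norm), and a direct computation on elementary tensors yields
\[
(\omega \otimes \varphi)(z) = \langle (\pi_\omega \otimes \pi_\varphi)(z)(\xi_\omega \otimes \xi_\varphi), \xi_\omega \otimes \xi_\varphi\rangle.
\]
Cauchy--Schwarz then gives $|(\omega \otimes \varphi)(z)| \leq \|z\|_{\min}\|\xi_\omega\|^2 \|\xi_\varphi\|^2 = \|z\|_{\min} \|\omega\| \|\varphi\|$, as required.

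For general (not necessarily positive) $\omega$ and $\varphi$, I would reduce to the positive case via Sakai's polar decomposition in the double duals, writing each functional as a partial isometry in $\mC^{**}$ (respectively, $\mD^{**}$) acting on a positive functional of the same norm. Once the norm inequality is established on $\mC \otimes \mD$, density and continuity (the standard BLT extension) produce a unique $L_\varphi \in B(\mC \omin \mD, \mC)$, and combining the matching upper and lower bounds yields $\|L_\varphi\| = \|\varphi\|$.
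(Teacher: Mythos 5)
Your proof is correct, but note that the paper offers no proof of this lemma at all --- it is invoked as a well-known fact (this is Tomiyama's left slice map; the key estimate $\|\omega\otimes\varphi\|_{(\mC\omin\mD)^*}=\|\omega\|\,\|\varphi\|$ is, e.g., Takesaki, Theorem IV.4.25). Your argument is essentially that standard one: define $L_\varphi$ algebraically, dualize so that the upper bound reduces to $|(\omega\otimes\varphi)(z)|\le\|\omega\|\,\|\varphi\|\,\|z\|_{\min}$, prove this for positive $\omega,\varphi$ via GNS and Cauchy--Schwarz, and handle general functionals by polar decomposition in the biduals. Two steps deserve tightening, though neither is a genuine gap. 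First, the contractivity of $\pi_\omega\otimes\pi_\varphi$ on $\mC\omin\mD$ does not follow from the minimality of $\|\cdot\|_{\min}$ among $C^*$-norms: $z\mapsto\|(\pi_\omega\otimes\pi_\varphi)(z)\|$ is only a $C^*$-\emph{seminorm} when the GNS representations are not faithful, and minimality among norms does not dominate arbitrary $C^*$-seminorms (the maximal norm already shows that such an inference is invalid). The correct justification is the independence of the spatial norm from the choice of faithful representations together with a direct-sum trick: replace $\pi_\omega$ by $\pi_\omega\oplus\pi$ with $\pi$ faithful (similarly for $\pi_\varphi$), so that the enlarged tensor representation computes $\|\cdot\|_{\min}$ exactly and $(\pi_\omega\otimes\pi_\varphi)(z)$ is its compression to a reducing subspace. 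Second, the polar-decomposition reduction is only named, not executed; to carry it out one writes $\omega(\cdot)=|\omega|(\cdot\,u)$ and $\varphi(\cdot)=|\varphi|(\cdot\,v)$ with contractions $u\in\mC^{**}$, $v\in\mD^{**}$, approximates $u\otimes v$ weak$^*$ in $(\mC\omin\mD)^{**}$ by contractions from $\mC\otimes\mD$, and uses the normal extension of the already-bounded positive functional $|\omega|\otimes|\varphi|$. Both points are routine to fill in, so the proposal stands as a complete and correct proof of a statement the paper merely cites.
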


It is not yet clear (at least to us) whether we can drop unitality of
$\mD$ in \Cref{D-unital}. However, if we restrict to
finite-dimensional $*$-subalgebras of $\mC$,  we have the
following positive answer:

\begin{proposition}\label{Finite-dim}
Let $\mC$ and $\mD$ be $C^*$-algebras. Then, for any two
finite-dimensional $*$-subalgebras $\mA$ and $\mB$ of a $C^*$-algebra
$\mC$,
\[
d_{KK}(\mA, \mB)\leq d_{KK}(\mA \omin \mD, \mB\omin \mD)
\]
and
\[
d_0(\mA, \mB)\leq d_{0}(\mA \omin \mD, \mB\omin \mD).
\]
\end{proposition}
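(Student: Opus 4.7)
The plan is to exploit the left slice maps from \Cref{slice-map}. Since the case of unital $\mD$ is already handled by \Cref{D-unital}, the new content is the non-unital case; assume $\mD \neq \{0\}$ (otherwise the statement is vacuous). By the Hahn–Banach theorem, pick $d \in \mD$ with $\|d\| = 1$ and $\varphi \in \mD^*$ with $\|\varphi\| = 1$ and $\varphi(d) = 1$. The associated slice map $L_\varphi \in B(\mC \omin \mD, \mC)$ is a contraction satisfying $L_\varphi(c \otimes d) = \varphi(d)c = c$ for every $c \in \mC$. The key structural observation is that $L_\varphi$ restricts to a contraction $\mB \omin \mD \to \mB$ (and analogously on $\mA$): on $\mB \otimes \mD$, $L_\varphi(b \otimes d') = \varphi(d')\, b \in \mB$, and by continuity of $L_\varphi$ combined with the finite-dimensionality (hence norm-closedness) of $\mB$, the image of $\mB \omin \mD = \overline{\mB \otimes \mD}^{\|\cdot\|_{\min}}$ lies inside $\mB$. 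This is the unique place where the finite-dimensional hypothesis is used.

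With this in hand, both inequalities are immediate. Fix $\epsilon > 0$ and set $\gamma = d_{KK}(\mA \omin \mD, \mB \omin \mD) + \epsilon$. For any $a \in B_1(\mA)$, \Cref{min-facts}(1) gives $\|a \otimes d\|_{\min} = \|a\|\|d\| \leq 1$, so $a \otimes d \in B_1(\mA \omin \mD)$, and by definition of $d_{KK}$ one finds $w \in B_1(\mB \omin \mD)$ with $\|a \otimes d - w\|_{\min} \leq \gamma$. Applying $L_\varphi$ and using $L_\varphi(a \otimes d) = a$ together with $\|L_\varphi\| = 1$ yields $L_\varphi(w) \in B_1(\mB)$ and $\|a - L_\varphi(w)\| \leq \gamma$. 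Swapping the roles of $\mA$ and $\mB$ and letting $\epsilon \to 0$ produces the $d_{KK}$ inequality. The identical argument, with the unit-ball constraint on $w$ dropped and invoking \Cref{d0-via-strict-near-inclusion} to reduce to strict near inclusions, yields the $d_0$ inequality.

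I do not foresee any serious obstacle; the proof is a direct application of \Cref{slice-map} once the Hahn–Banach witness $(d, \varphi)$ is chosen. The only subtle bookkeeping is verifying that $L_\varphi$ genuinely lands inside the subalgebras $\mA$ and $\mB$ themselves, rather than merely in $\overline{\mA}$ and $\overline{\mB}$; and this is precisely what finite-dimensionality secures. (In fact, the same proof evidently works whenever $\mA$ and $\mB$ are norm-closed $*$-subalgebras, but we record only the finite-dimensional case here.)
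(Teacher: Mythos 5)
Your proof is correct, but it takes a genuinely different (and simpler) route than the paper's. The paper fixes a \emph{state} $\varphi$ on $\mD$ together with an approximate unit $\{u_\lambda\}$, so that $\varphi(u_\lambda)\to 1$ only in the limit; it must therefore work with the whole net $\{L_\varphi(w_\lambda)\}$ and extract a norm-convergent subnet, which is precisely where compactness of $B_1(\mB)$ --- i.e.\ finite-dimensionality --- enters. By instead choosing, via Hahn--Banach, a single norm-one $d\in\mD$ and a norm-one $\varphi\in\mD^*$ with $\varphi(d)=1$, you arrange $L_\varphi(a\otimes d)=a$ exactly, so one application of \Cref{slice-map} finishes the argument with no limiting procedure. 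Each ingredient you use is legitimate: $\|a\otimes d\|_{\min}=\|a\|\,\|d\|$ by \Cref{min-facts}(1), $\|L_\varphi\|=\|\varphi\|$ for an arbitrary (not necessarily positive) functional exactly as \Cref{slice-map} asserts, and $L_\varphi(\mB\omin\mD)\subseteq\mB$ by continuity together with norm-closedness of $\mB$. The striking consequence, which you note parenthetically, is that your argument nowhere uses finite-dimensionality --- only that $\mA$ and $\mB$ are norm-closed, which every $C^*$-subalgebra is --- so it yields the conclusion of \Cref{D-unital} for arbitrary $C^*$-subalgebras and arbitrary nonzero $\mD$, removing the unitality hypothesis that the authors explicitly describe as unclear immediately before \Cref{Finite-dim}. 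Since proving more than intended is often a symptom of a hidden error, you should spell out the norm-attaining choice of $(d,\varphi)$ and the use of \Cref{slice-map} for a non-state functional with particular care; but I could not locate a gap, and I believe the argument is sound.
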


\begin{proof}
Fix a $\varphi \in S(\mD)$ and an approximate unit $\{u_{\lambda}\}$
for $\mD$ in $B_{1}(\mD).$ It is a well-known fact that
$\lim_{\lambda}\varphi(u_{\lambda}) = \|\varphi\| = 1$ - see
\cite[Proposition 3.1.4]{Ped}.

Let $a\in B_{1}(\mA)$ and $\epsilon>0$. Then, $a\otimes u_{\lambda}\in B_{1}(\mA
\omin \mD)$ for all $\lambda$. Thus, for each $\lambda$, there exists
a $w_{\lambda}\in B_{1}(\mB \omin \mD)$ such that
\[
\|a\otimes u_{\lambda}- w_{\lambda}\|_{\min}\leq d_{KK}(\mA \omin \mD,
\mB\omin \mD)+ \epsilon.
\]
Consider the left slice map $L_\varphi \in B(\mC \omin \mD, \mC)$ as
in \Cref{slice-map}. Note that $\{L_{\varphi}(w_{\lambda})\}$ is a
bounded net in the finite-dimensional space $\mB$; so, it has a
convergent subnet in $\mB$, say, $\{ L_{\varphi}(w_{\lambda_{i}}) : i
\in I\}$, with limit $b_{0}\in B_{1}(\mB)$. Then,
\begin{eqnarray*}
\|a-b_{0}\| &=& \|\lim_{i }\varphi(u_{\lambda_{i}})a- \lim_{i
}L_{\varphi}(w_{\lambda_{i}})\| \\ &=& \lim_{i}\|L_{\varphi}(a\otimes
u_{\lambda_{i}}- w_{\lambda_{i}})\| \\ &\leq & d_{KK}(\mA \omin \mD,
\mB\omin \mD)+ \epsilon. \qquad (\text{since $L_{\varphi}$ is a
  contraction})\\
\end{eqnarray*}  
Thus, 
$$\sup_{a\in B_{1}(\mA)} d(a, B_{1}(\mB))\leq d_{KK}(\mA \omin \mD,
\mB\omin \mD)+ \epsilon.
$$
Likewise,  
\[
\sup_{b\in B_{1}(\mB)} d(b, B_{1}(\mA))\leq d_{KK}(\mA \omin \mD,
\mB\omin \mD)+ \epsilon.
\]
Hence, $d_{KK}(\mA, \mB)\leq d_{KK}(\mA \omin \mD, \mB\omin \mD)+
\epsilon$. Since $\epsilon>0$ was arbitrary, we get
\[
d_{KK}(\mA, \mB)\leq d_{KK}(\mA \omin \mD, \mB\omin \mD).
\]

Similarly, one can do for the Christensen distance as well.
\end{proof}

\color{black}
\subsection{Tensor product with commutative $C^*$-algebras}\( \)

In view of \Cref{D-unital}, we readily deduce the following improvement of
\cite[Theorem 3.2]{Ch2} when the commutative $C^*$-algebra is unital
(see \Cref{christensen-commutative} above for the statement).

\begin{theorem}\label{Christensen-D-commutative-unital}
Let $\mD$ be a commutative unital $C^{*}$-algebra and, $\mA$ and $\mB$
be $C^*$-subalgebras of a $C^*$-algebra $\mC$. Then,
\[
d_{0}(\mA\omin \mD, \mB\omin \mD)= d_{0}(\mA, \mB).
\]
\end{theorem}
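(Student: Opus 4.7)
The plan is to obtain the claimed equality by combining two inequalities already established in the excerpt, one coming from Christensen's work and one from our preceding observation about unital tensor factors. Neither direction requires new arguments, so the proof is essentially a one-line synthesis.

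For the inequality $d_0(\mA \omin \mD, \mB \omin \mD) \leq d_0(\mA, \mB)$, I would invoke \Cref{christensen-commutative} directly: Christensen showed that for any commutative $C^*$-algebra $\mD$ and any $\gamma > 0$, $\mB \subset_\gamma \mA$ implies $\mB \omin \mD \subset_\gamma \mA \omin \mD$, and by symmetry the same holds with $\mA$ and $\mB$ swapped. Passing to the infimum over admissible $\gamma$ yields the inequality. Unitality of $\mD$ plays no role here.

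For the reverse inequality $d_0(\mA, \mB) \leq d_0(\mA \omin \mD, \mB \omin \mD)$, I would apply \Cref{D-unital} (i.e., \Cref{KK-n-d0-inequality}), which is precisely this estimate and which holds because $\mD$ is unital: any state $\varphi \in \mS(\mD)$ satisfies $\varphi(1_\mD) = 1$, hence is a unital completely positive projection onto $\C \subset \mD$, and \Cref{D-ce-P} (with $\mP = \C$) then gives the bound. Combining the two inequalities yields the desired equality, completing the proof.

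The main step really is noticing that the two halves are already in hand: Christensen's \Cref{christensen-commutative} gives ``$\leq$'' and our \Cref{D-unital} gives ``$\geq$'', so no new technical obstacle arises. The only subtlety worth flagging explicitly is that \Cref{christensen-commutative} does not require unitality of $\mD$, whereas the reverse inequality genuinely does; this is why the equality is stated only for unital commutative $\mD$, consistent with the remark preceding \Cref{Finite-dim} that it is unclear whether unitality can be dropped in \Cref{D-unital}.
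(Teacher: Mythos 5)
Your proposal is correct and is exactly the paper's argument: the paper deduces the theorem by combining the inequality $d_0(\mA\omin\mD,\mB\omin\mD)\le d_0(\mA,\mB)$ from \Cref{christensen-commutative} with the reverse inequality from \Cref{D-unital}, just as you do. No further comment is needed.
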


The proof of the following theorem is an appropriate adaptation of
that of \cite[Theorem 3.2]{Ch2}:

\begin{theorem}\label{KK-D-commutative}
 Let $\mD$ be a commutative $C^{*}$-algebra and, $\mA$ and $\mB$ be
 $C^*$-subalgebras of a $C^*$-algebra $\mC$. Then,
\[
d_{KK}(\mA\omin \mD, \mB\omin \mD)\leq d_{KK}(\mA, \mB).
\]
Moreover, if $\mD$ is  unital, then
\[
d_{KK}(\mA\omin \mD, \mB\omin \mD)= d_{KK}(\mA, \mB).
\] 
\end{theorem}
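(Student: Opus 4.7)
The strategy is to mimic Christensen's argument from \cite[Theorem 3.2]{Ch2} after passing through Gelfand duality. Writing $\mD \cong C_0(X)$ for a locally compact Hausdorff space $X$, I would use the canonical isometric identifications $\mA \omin \mD \cong C_0(X,\mA)$ and $\mB \omin \mD \cong C_0(X,\mB)$ (with the supremum norm), which come from \Cref{min-facts}(1) together with the fact that commutative minimal tensor products yield continuous vector-valued functions vanishing at infinity. By symmetry, the inequality $d_{KK}(\mA \omin \mD,\mB \omin \mD)\leq d_{KK}(\mA,\mB)$ then reduces to showing that for every $f\in B_1(C_0(X,\mA))$ and every $\epsilon>0$ there is some $g\in B_1(C_0(X,\mB))$ with $\|f-g\|_\infty \leq d_{KK}(\mA,\mB)+3\epsilon$.

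To produce such a $g$, I plan a partition-of-unity construction. Set $\gamma:=d_{KK}(\mA,\mB)$. Since $f$ vanishes at infinity, $K:=\{x\in X:\|f(x)\|\geq \epsilon\}$ is compact. Using local compactness of $X$ and continuity of $f$, cover $K$ by finitely many relatively compact open sets $U_1,\dots,U_n$ on each of which the oscillation of $f$ is strictly less than $\epsilon$, and pick $x_i\in U_i$. Let $a_i:=f(x_i)\in B_1(\mA)$, and use the definition of $d_{KK}$ to choose $b_i\in B_1(\mB)$ with $\|a_i-b_i\|\leq \gamma+\epsilon$. By a standard locally-compact Urysohn argument, select a partition of unity $\{\phi_i\}_{i=1}^n\subset C_c(X)$ with $\phi_i\geq 0$, $\mathrm{supp}(\phi_i)\subset U_i$, $\sum_i \phi_i\leq 1$ on $X$, and $\sum_i \phi_i\equiv 1$ on $K$; then set
\[
g:=\sum_{i=1}^n \phi_i\otimes b_i \in \mB\omin \mD.
\]
Since $\|g(x)\|\leq \sum_i \phi_i(x)\|b_i\|\leq 1$ pointwise, $g\in B_1(\mB \omin \mD)$.

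The remaining task is to estimate $\|f-g\|_\infty$ by splitting over $K$ and $X\setminus K$. On $K$, where $\sum_i \phi_i(x)=1$, I write $f(x)-g(x)=\sum_i \phi_i(x)(f(x)-b_i)$ and bound each nonzero summand by $\|f(x)-a_i\|+\|a_i-b_i\|\leq \epsilon+(\gamma+\epsilon)$, giving $\|f(x)-g(x)\|\leq \gamma+2\epsilon$. Off $K$, the same pointwise bound yields $\|\sum_i \phi_i(x)f(x)-g(x)\|\leq \gamma+2\epsilon$, while the residual term satisfies $\|f(x)(1-\sum_i \phi_i(x))\|\leq \|f(x)\|<\epsilon$; adding these gives $\|f(x)-g(x)\|\leq \gamma+3\epsilon$. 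Letting $\epsilon\downarrow 0$ delivers the inequality. The main technical nuisance here is precisely the control off $K$: the partition of unity cannot be chosen to sum to $1$ globally when $\mD$ is non-unital, which is why the compact ``bulk'' $K$ has to be isolated and handled separately. When $\mD$ is unital, $X$ is compact and this complication disappears; furthermore, the reverse inequality $d_{KK}(\mA,\mB)\leq d_{KK}(\mA\omin \mD,\mB\omin \mD)$ is supplied directly by \Cref{D-unital}, and combining the two inequalities yields the claimed equality.
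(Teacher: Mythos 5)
Your proposal is correct and follows essentially the same route as the paper: Gelfand duality to reduce to $C_0(X,\mA)$ versus $C_0(X,\mB)$, isolation of a compact set $K$ where $\|f\|\geq\epsilon$, a finite cover of $K$ by sets of small oscillation, a partition of unity summing to $1$ on $K$ and at most $1$ globally, the pointwise estimate split into the on-$K$ and off-$K$ cases, and \Cref{D-unital} for the reverse inequality when $\mD$ is unital. The only differences are cosmetic (your error budget is $3\epsilon$ where the paper's is $4\delta$, and you approximate in the opposite direction, which is immaterial by symmetry).
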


\begin{proof}
  Suppose that $\mD= C_{0}(\Omega)$ for some locally compact Hausdorff
  space $\Omega$. Then, it is a standard fact that there exists a
  $*$-isomorphism from $\mC \omin \mD$ onto $ C_{0}(\Omega, \mC)$,
  which maps $\mB \omin \mD$ onto $ C_{0}(\Omega, \mB)$ and $\mA \omin
  \mD $ onto $ C_{0}(\Omega, \mA)$. Thus, it suffices to show that
  $d_{KK}(C_0(\Omega, \mA), C_0(\Omega, \mB) \leq d_{KK}(\mA, \mB)$.

Let $\delta >0$ and $f\in B_{1}(C_{0}(\Omega, \mB))$. Then, there
exists a compact set $K$ in $\Omega$ such that $\|f(t)\|\leq \delta$
for all $t\in \Omega\setminus K$. Further, by the compactness of $K$,
there exists a finite collection $\{b_1,b_2,..., b_n\}\subset
B_{1}(\mB)\cap f(K) $ such that $K \subseteq \cup_{i=1}^n V_{i}$,
where $V_{i}:= f^{-1}( B^o_{\frac{\delta}{2}}(b_{i}))$ and
$B^o_{\frac{\delta}{2}}(b_i)$ denotes the open ball of radius
$\frac{\delta}{2}$ with center $b_i$ in $\mB$. Note that, for any two
points $t$ and $s$ in any $V_i$, $\|f(t) -f(s) \| < \delta$.

Let $\{h_i\}_{i=1}^n \subset C_{c}(\Omega)$ be a partition of unity on
$\Omega$ subordinate to the open cover $\{V_{i}\}_{i=1}^n$, i.e.,
$\mathrm{supp}(h_i)\subset V_{i}$, $\, 0\leq h_{i}\leq \mathbf{1}$ for
every $ i$, $ \sum_{i=1}^{n}h_{i}= \mathbf{1}$ on $K$ and
$\sum_{i=1}^{n}h_{i}\leq 1$ on $\Omega$.
  
 For each $1 \leq j\leq n$, fix a $t_{j}\in \mathrm{supp}(h_j)$. So,
 $f(t_{j})\in B_{1}(\mB)$ and, since $d_{KK}(\mA, \mB)< d_{KK}(\mA,
 \mB) + \delta$, there exists an $a_{j}\in B_{1}(\mA)$ such that
 $\|f(t_{j})-a_{j}\| \leq d_{KK}(\mA, \mB) + \delta$.  Consider $g:=
 \sum_{i=1}^{n}h_{i}a_{i} \in B_{1}(C_{0}(\Omega, \mA))$.
 
We show that $\|f-g\|\leq d_{KK}(\mA, \mB) + 4\delta$. Let $t\in \Omega$.

\noindent \textbf{Case (1).}\, If $t\in K$, then
\begin{eqnarray*}
\|f(t)-g(t)\| &\leq&  \|f(t)-\sum_{i=1}^{n}h_{i}(t)a_{i}  \|\\ &=&
 \|\sum_{i=1}^{n}h_{i}(t)f(t)-\sum_{i=1}^{n}h_{i}(t)a_{i}  \| \\ &=&
\|\sum_{i=1}^{n}h_{i}(t)(f(t)-a_{i})\|\\ &\leq &
\sum_{i=1}^{n}h_{i}(t)\|(f(t)-a_{i})\|\\ &\leq &
\sum_{i=1}^{n}h_{i}(t)\|(f(t)-f(t_{i}))\|+
\sum_{i=1}^{n}h_{i}(t)\|(f(t_{i})-a_{i})\|\\ &\leq & \delta +
d_{KK}(\mA, \mB) + \delta ,
\end{eqnarray*} 
where the  last inequality holds because when $t \in V_j$ for
some $j$, then $\|f(t) - f(t_j)\|< \delta$ and when $t \notin V_r$ for
some $r$, then $h_r(t) = 0$.\smallskip

\textbf{Case (2).}\, If $t\in \Omega \setminus \cup_{i=1}^n V_{i}$,
then $g(t)=0$ and $\|f(t)\|\leq \delta < d_{KK}(\mA, \mB)+ 4\delta
$.  \smallskip

\textbf{Case (3).}\, If $t\in \cup_{i=1}^n V_{i}\setminus K$. Then, as
$\sum_{i=1}^{n}h_{i}\leq 1$,  we observe that 
\begin{eqnarray*}
\lefteqn{\|f(t)-g(t)\|}\\ & = & \|f(t)-\sum_{i=1}^{n}h_{i}(t)a_{i}\|\\
& \leq & \|f(t)\| + \|\sum_{i=1}^{n}h_{i}(t)a_{i}\| \\
& \leq &  \delta + \|\sum_{i=1}^{n}h_{i}(t)(a_{i}- f(t_{i})+ f(t_{i}))\|\\
&\leq & \delta + \|\sum_{i=1}^{n}h_{i}(t)(a_{i} - f(t_{i}))\| + \|\sum_{i=1}^{n}h_{i}(t)f(t_{i})\|\\
&\leq &  \delta + \sum_{i=1}^{n}h_{i}(t)\|a_{i} - f(t_{i})\|+ \| \sum_{i=1}^{n}h_{i}(t)(f(t_{i}) - f(t) + f(t))\|\\
&\leq & \delta + (d_{KK}(\mA, \mB) + \delta) \sum_{i=1}^{n}h_{i}(t) + \| \sum_{i=1}^{n}h_{i}(t)(f(t_{i}) - f(t))\| + \|\sum_{i=1}^{n}h_{i}(t)f(t)\|  \\
& \leq &  \delta +d_{KK}(\mA, \mB) + \delta +  \sum_{i=1}^{n}h_{i}(t)\|f(t_{i}) - f(t)\| + \sum_{i=1}^{n}h_{i}(t)\|f(t)\| \\
& \leq & \delta + d_{KK}(\mA, \mB) + \delta + \delta + \delta \\
& \leq &  d_{KK}(\mA, \mB) + 4\delta.
\end{eqnarray*} 
Thus, $\|f-g\|\leq d_{KK}(\mA, \mB) + 4\delta$. 

In particular, 
\[
d(f, B_{1}(C_{0}(\Omega, \mA))) \leq d_{KK}(\mA, \mB) + 4\delta .
\]
Thus, 
\[
\sup_{f\in B_{1}(C_{0}(\Omega, \mB))}d(f, B_{1}(C_{0}(\Omega, \mA)))
\leq d_{KK}(\mA, \mB) + 4\delta.
\]
By symmetry, we also have, 
\[
\sup_{g\in B_{1}(C_{0}(\Omega, \mA))}d(g, B_{1}(C_{0}(\Omega, \mB)))
\leq d_{KK}(\mA, \mB) + 4\delta.
\] 
Hence, $d_{KK}(C_{0}(\Omega, \mA), C_{0}(\Omega, \mB))\leq d_{KK}(\mA,
\mB) + 4\delta$. Since $\delta>0 $ was arbitrary, we get
\[
d_{KK}(\mA\omin \mD, \mB\omin \mD)\leq d_{KK}(\mA, \mB).
\]
\end{proof}

By \Cref{Finite-dim}, \Cref{KK-D-commutative} and
\Cref{christensen-commutative} we obtain the following:
\begin{cor}
Let $\mD$ be a commutative  $C^*$-algebra. Then, for any two finite-dimensional $*$-subalgebras $\mA$ and $\mB$ of $\mC$, we have
\[
d_{KK}(\mA, \mB)= d_{KK}(\mA \omin \mD, \mB\omin \mD)
\]
and
\[
d_0(\mA, \mB)= d_{0}(\mA \omin \mD, \mB\omin \mD).
\]
\end{cor}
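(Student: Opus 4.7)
The plan is to obtain both equalities by combining the three referenced results, without any additional work. In each case, one inequality is supplied by \Cref{Finite-dim} (which does not need $\mD$ to be unital, since it uses an approximate unit and the finite-dimensionality of $\mA$, $\mB$ to extract a convergent subnet from the image of the left slice map), and the reverse inequality is supplied by the commutativity-based upper bounds already established.

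More precisely, for the Kadison-Kastler distance I would first apply \Cref{Finite-dim} to the pair $\mA, \mB$ to get
\[
d_{KK}(\mA, \mB) \leq d_{KK}(\mA \omin \mD, \mB \omin \mD),
\]
and then invoke \Cref{KK-D-commutative} for the opposite inequality
\[
d_{KK}(\mA \omin \mD, \mB \omin \mD) \leq d_{KK}(\mA, \mB),
\]
which holds because $\mD$ is commutative (unitality of $\mD$ is not needed for the $\leq$ direction there). Chaining these gives the first equality. For the Christensen distance, the argument is identical in structure: \Cref{Finite-dim} gives the lower bound $d_0(\mA, \mB) \leq d_0(\mA \omin \mD, \mB \omin \mD)$, and \Cref{christensen-commutative} gives the reverse bound $d_0(\mA \omin \mD, \mB \omin \mD) \leq d_0(\mA, \mB)$ since $\mD$ is commutative.

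There is essentially no obstacle here; the corollary is a formal consequence of the three previous results. The only thing worth flagging explicitly in the write-up is that the key novelty compared to \Cref{Christensen-D-commutative-unital} and the ``moreover'' clause of \Cref{KK-D-commutative} is that $\mD$ need not be unital, and this relaxation is precisely what \Cref{Finite-dim} buys us for finite-dimensional $*$-subalgebras via the approximate-unit/slice-map argument. So the proof I would write is a two-line invocation for each equality, perhaps preceded by the single sentence explaining why unitality of $\mD$ can now be dropped.
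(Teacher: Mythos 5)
Your proposal is correct and is exactly the paper's argument: the corollary is stated as an immediate consequence of \Cref{Finite-dim}, \Cref{KK-D-commutative} and \Cref{christensen-commutative}, with the lower bounds coming from the finite-dimensional slice-map result and the upper bounds from the commutative tensor-product theorems. Nothing further is needed.
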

\subsection{Tensor product with scattered $C^*$-algebras}\( \)

First, we briefly recall the notion of support of a positive linear
functional on a $C^*$-algebra.

If $\omega$ is a positive linear functional on a $C^*$-algebra $\mA$,
then identifying $\mA$ with $\pi_\mA(\mA)$, we can extend it to a
normal positive linear functional on $\mA^{**}$ (by
\Cref{continuous=WOT-continuous} and the Hahn-Banach theorem for the
weak$^*$-topology on $\mA^{**}$), say $\hat{\omega}$. The support
projection of $\hat{\omega}$ in $\mA^{**}$ is called the support of
$\omega$ - see, for instance, \cite[Page no.-140]{Tak}.

\begin{definition}\cite{J}
  Let $f$ be a positive linear functional on a $C^*$-algebra $\mC$
  with support $s$. Then, $f$ is called atomic, if for
  each projection $p\in \mC^{**}$ with $p\leq s$, there
  exists a minimal projection $q\in \mC^{**}$ such that
  $q\leq p$.
\end{definition}

\begin{definition} \cite{J} A $C^*$-algebra $\mC$ is called
scattered, if each positive linear functional on $\mC$ is atomic.
\end{definition}

Here are some standard examples of scattered $C^*$-algebras. 
\begin{enumerate}
\item Every finite dimensional $C^*$-algebra is scattered.
\item The $C^*$-algebra $\mathcal{K}$ of compact operators on
  $l^2(\mathbb{N})$ is scattered.
\item If $X$ is a scattered locally compact Hausdorff space, then
  $C_0(X)$ is scattered.  (Recall that by a scattered topological
  space we mean that it does not contain any perfect subset.)
\end{enumerate}

The following theorem is due to  Huruya (\cite{T}), which was also
proved later, by a different method, by Quigg (\cite{Q}):
\begin{theorem}\cite{T}\label{scattered-bidual}
Let $\mC$ and $\mD$ be two $C^*$-algebras and one of them be scattered. Then,
\[
(\mC \omin \mD)^{**}\cong \mC^{**} \bar{\otimes} \mD^{**},
\]
where $\bar{\otimes}$ denotes the $W^*$-tensor product. \end{theorem}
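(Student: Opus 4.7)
The plan is to exhibit a canonical surjective normal $*$-homomorphism
\[
\Phi : (\mC \omin \mD)^{**} \lra \mC^{**} \, \bar{\otimes} \, \mD^{**}
\]
and then use scatteredness to upgrade it to an isomorphism. For the map itself, let $\pi_\mC$ and $\pi_\mD$ denote the universal representations of $\mC$ and $\mD$ on $\mH_\mC$ and $\mH_\mD$, respectively. By \Cref{min-facts}(1), $\pi := \pi_\mC \otimes \pi_\mD$ is a faithful $*$-representation of $\mC \omin \mD$ on $\mH_\mC \otimes \mH_\mD$, and by the very definition of the $W^*$-tensor product,
\[
\pi(\mC \omin \mD)'' = \pi_\mC(\mC)'' \, \bar{\otimes} \, \pi_\mD(\mD)'' \cong \mC^{**} \, \bar{\otimes} \, \mD^{**}.
\]
The universal property of the enveloping von Neumann algebra then lifts $\pi$ to the desired normal $*$-homomorphism $\Phi$ onto this double commutant; surjectivity is automatic from the normality of $\Phi$ together with Kaplansky density of $\pi_\mC(\mC) \otimes \pi_\mD(\mD)$ inside $\mC^{**} \, \bar{\otimes} \, \mD^{**}$.

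The main content is the injectivity of $\Phi$, which is where scatteredness is used decisively. Scatteredness of $\mD$ is equivalent to $\mD^{**}$ being atomic, and therefore $\mD^{**} \cong \bigoplus_{i \in I} B(H_i)$ as a $W^*$-algebra; moreover, every scattered $C^*$-algebra admits a transfinite composition series $\{0\} = J_0 \subsetneq J_1 \subsetneq \cdots \subsetneq J_\alpha = \mD$ whose successive quotients are elementary $C^*$-algebras (i.e., $c_0$-direct sums of copies of $\mathcal{K}(H)$ for various Hilbert spaces $H$). I would first settle the base case $\mD = \mathcal{K}(H)$ by realizing $\mC \omin \mathcal{K}(H)$ as the $\mC$-valued compact operators on $H$, whose normal dual can be written as $\sigma$-weakly convergent series of matrix-coefficient functionals on $\mC^{**}$ via the slice maps $L_\varphi$ of \Cref{slice-map}; this produces the identification $(\mC \omin \mathcal{K}(H))^{**} \cong \mC^{**} \, \bar{\otimes} \, B(H)$ directly, and in particular the injectivity of $\Phi$ in this elementary case.

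For general scattered $\mD$, I would run a transfinite induction along the composition series. The inductive step combines two ingredients: (i) exactness of the minimal tensor product (scattered algebras are nuclear and hence exact), which converts each short exact sequence $0 \to I \to \mD \to \mD/I \to 0$ into a short exact sequence $0 \to \mC \omin I \to \mC \omin \mD \to \mC \omin (\mD/I) \to 0$; and (ii) the compatibility of biduals with such tensor-product extensions, governed by the central decomposition of $\mD^{**}$ into summands coming from $I^{**}$ and $(\mD/I)^{**}$. The main obstacle is the limit-ordinal step of this induction: one must verify that the tower of partial identifications assembles coherently under the $\sigma$-weak topology. This is a technical but routine diagram-chase using normality of $\Phi$ and the $W^*$-direct sum structure of $\mD^{**}$, yet it is the point at which the scatteredness hypothesis is really felt and where the utmost care is needed.
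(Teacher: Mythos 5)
First, a structural point: the paper does not prove this statement at all. \Cref{scattered-bidual} is imported from Huruya \cite{T} (with an alternative proof attributed to Quigg \cite{Q}) and is used as a black box in \Cref{scattered-relations}, so there is no in-paper argument to compare yours against; your proposal has to stand on its own. Its skeleton is the standard one and is sound as far as it goes: the canonical normal surjection $\Phi\colon(\mC \omin \mD)^{**}\to\pi_\mC(\mC)''\,\bar{\otimes}\,\pi_\mD(\mD)''\cong\mC^{**}\bar{\otimes}\mD^{**}$ exists for any pair of $C^*$-algebras, and the entire content of the theorem is the injectivity of $\Phi$, equivalently the surjectivity of the isometry $\Phi_*$ from the predual of $\mC^{**}\bar{\otimes}\mD^{**}$ into $(\mC\omin\mD)^*$.

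The genuine gap is the step you yourself identify as the main obstacle and then wave off as a ``routine diagram-chase'': the limit-ordinal step. Biduals do not commute with inductive limits (compare $c_0=\overline{\bigcup_n\C^n}$ with $c_0^{**}=\ell^\infty$), so ``assembling the tower of partial identifications under the $\sigma$-weak topology'' is precisely where a naive argument fails: $J_\lambda^{**}$ is not determined $\sigma$-weakly by the $J_\alpha^{**}$, $\alpha<\lambda$. What actually closes this step is an argument on the dual side in the \emph{norm} topology: the image of $\Phi_*$ is norm-closed in $(\mC\omin J_\lambda)^*$, and one shows it is all of $(\mC\omin J_\lambda)^*$ by proving that every positive functional is a norm-limit of functionals supported on the ideals $\mC\omin J_\alpha$ (via approximate units of these ideals, using $\sup_\alpha\|f\restriction_{\mC\omin J_\alpha}\|=\|f\|$), each of which extends normally by the inductive hypothesis together with the identification $\mC^{**}\bar{\otimes}J_\alpha^{**}=(1\otimes z_\alpha)\bigl(\mC^{**}\bar{\otimes}J_\lambda^{**}\bigr)$ for the central support $z_\alpha$ of $J_\alpha$. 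None of this is in your sketch, and without it the induction does not close. A secondary, smaller error: exactness of $\mD$ concerns the functor $\mD\omin(-)$, whereas the successor step needs exactness of $0\to\mC\omin I\to\mC\omin\mD\to\mC\omin(\mD/I)\to0$ for an \emph{arbitrary} $\mC$; the fact you want is true, but it follows from nuclearity of the quotient $\mD/I$ (so that $\omin$ and $\omax$ agree on it and one can use right-exactness of $\omax$), not from exactness of $\mD$. In short, the architecture is a viable route to the theorem, but the two steps you label as routine are where the proof actually lives.
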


\begin{proposition}\label{scattered-relations}
Let $\mC$ be a $C^*$-algebra and $\mD$ be a scattered
$C^*$-algebra. Then, for any two $C^*$-subalgebras $\mA$ and $\mB$ of
$\mC$, 
\[
d_{0}(\mA,\mB)\leq d_{0}(\mA \omin \mD, \mB\omin \mD).
\]

In particular, if $\mD$ is commutative as well, then
\[
d_{0}(\mA,\mB) = d_{0}(\mA \omin \mD, \mB\omin \mD).
\]
\end{proposition}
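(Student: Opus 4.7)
The plan is to combine \Cref{enveloping-distance} with Huruya's identification (\Cref{scattered-bidual}) and a $W^*$-tensor product analogue of \Cref{D-ce-P}.

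First, by \Cref{enveloping-distance} applied twice, one has
\[
d_0(\mA, \mB) = d_0(\mA^{**}, \mB^{**}) \quad \text{and} \quad d_0(\mA \omin \mD, \mB \omin \mD) = d_0\big((\mA \omin \mD)^{**}, (\mB \omin \mD)^{**}\big).
\]
Since $\mD$ is scattered, \Cref{scattered-bidual} gives $(\mC \omin \mD)^{**} \cong \mC^{**} \bar{\otimes} \mD^{**}$; and a standard $\sigma$-weak density argument (using the fact that $\mA \otimes \mD$ is $\sigma$-weakly dense in $\mA^{**} \bar{\otimes} \mD^{**}$, which follows from Kaplansky's theorem) shows that under this identification $(\mA \omin \mD)^{**}$ corresponds to $\mA^{**} \bar{\otimes} \mD^{**}$, and similarly for $\mB$. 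Thus the problem reduces to showing
\[
d_0(\mA^{**}, \mB^{**}) \leq d_0\big(\mA^{**} \bar{\otimes} \mD^{**}, \mB^{**} \bar{\otimes} \mD^{**}\big).
\]

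For this remaining inequality, I would replay the argument of \Cref{D-ce-P} in the $W^*$-setting. As $\mD^{**}$ is unital, any normal state $\varphi$ on $\mD^{**}$ induces a normal contractive conditional expectation $E_\varphi := \mathrm{id} \bar{\otimes} \varphi : \mC^{**} \bar{\otimes} \mD^{**} \to \mC^{**}$ (identifying $\mC^{**}$ with $\mC^{**} \otimes 1$) that restricts to conditional expectations $\mA^{**} \bar{\otimes} \mD^{**} \to \mA^{**}$ and $\mB^{**} \bar{\otimes} \mD^{**} \to \mB^{**}$. For any $\gamma > 0$ with $\mA^{**} \bar{\otimes} \mD^{**} \subseteq_\gamma \mB^{**} \bar{\otimes} \mD^{**}$, choosing for each $a \in B_1(\mA^{**})$ a $\gamma$-approximant $y$ of $a \otimes 1 \in B_1(\mA^{**} \bar{\otimes} \mD^{**})$ and applying $E_\varphi$ produces $E_\varphi(y) \in \mB^{**}$ with $\|a - E_\varphi(y)\| = \|E_\varphi(a \otimes 1 - y)\| \leq \gamma$; symmetry then yields the desired inequality.

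The ``in particular'' statement follows immediately by combining the main inequality with \Cref{christensen-commutative}, which supplies the reverse bound when $\mD$ is commutative. The step I expect to require the most care is the identification in the second paragraph -- namely, verifying that Huruya's $*$-isomorphism restricts to give $(\mA \omin \mD)^{**} \cong \mA^{**} \bar{\otimes} \mD^{**}$ inside $\mC^{**} \bar{\otimes} \mD^{**}$ -- but this is standard once one invokes Kaplansky density on bounded parts.
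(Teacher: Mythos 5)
Your proposal is correct and follows essentially the same route as the paper's proof: reduce to biduals via \Cref{enveloping-distance}, identify $(\mA \omin \mD)^{**}$ with $\mA^{**} \bar{\otimes} \mD^{**}$ using Huruya's theorem, and then slice with a normal state on $\mD^{**}$ (a normal conditional expectation) applied to an approximant of $w \otimes 1$. The only cosmetic difference is that the paper phrases the reduction in terms of near-inclusions $\subseteq_{\gamma_0}$ rather than directly in terms of the distances, and it likewise leaves the compatibility of Huruya's isomorphism with the subalgebra inclusions implicit.
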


\begin{proof}
Let $\epsilon>0$ and fix a $\gamma_{0}>0$ such that
\[
d_{0}(\mA \omin \mD, \mB\omin \mD)< \gamma_{0}< d_{0}(\mA \omin \mD,
\mB\omin \mD)+ \epsilon.
\]
This implies that $\mA \omin \mD\subseteq_{\gamma_{0}}\mB\omin \mD$
and $\mB \omin \mD\subseteq_{\gamma_{0}}\mA\omin \mD$. We shall show
that $\mA \subseteq_{\gamma_{0}}\mB$ and $\mB
\subseteq_{\gamma_{0}}\mA$.

Since $\mA \omin \mD\subseteq_{\gamma_{0}}\mB\omin \mD$, it follows
from \Cref{enveloping-distance} that $(\mA \omin
\mD)^{**}\subseteq_{\gamma_{0}}(\mB\omin \mD)^{**}$. Further, as $\mD$
is scattered, $\mA^{**} \bar{\otimes}
\mD^{**}\subseteq_{\gamma_{0}}\mB^{**}\bar{\otimes} \mD^{**}$, by
\Cref{scattered-bidual}. We assert that
$\mA^{**}\subseteq_{\gamma_{0}}\mB^{**}$.

Let $w \in B_{1}(\mA^{**})$.  Then, $ w
\otimes 1 \in B_{1}(\mA^{**} \bar{\otimes} \mD^{**})$ and, as
$\mA^{**} \bar{\otimes}
\mD^{**}\subseteq_{\gamma_{0}}\mB^{**}\bar{\otimes} \mD^{**}$, there
exists a $z \in \mB^{**}\bar{\otimes} \mD^{**}$ such that $\|w \otimes
1 - z \|\leq \gamma_{0}.$ Let
$\varphi$ be a normal state on $\mD^{**}$. Then, the slice map
$L_\varphi: \mC^{**} \bar{\otimes} \mD^{**} \to
\mC^{**}$ is a normal conditional expectation - see
\cite[III.2.2.6.]{BL}. Clearly, $L_\varphi$ maps $\mB^{**}
\bar{\otimes} \mD^{**}$ onto $\mB^{**}$; so, $v :=
L_\varphi (z)\in
\mB^{**}$ and
\[
\|w -v\| = \|w \otimes 1 - v \otimes 1 \| = \|L_\varphi(w \otimes 1 - z) \|\leq \|w \otimes 1 - z \|\leq \gamma_{0}.
\]
This proves our assertion. Thus,   $\mA
\subseteq_{\gamma_{0}}\mB$,  by \Cref{enveloping-distance} again. Similiarly, we can conclude that $\mB
\subseteq_{\gamma_{0}}\mA$. Hence, 
\[
d_{0}(\mA,\mB)\leq \gamma_{0}< d_{0}(\mA \omin \mD, \mB\omin \mD)+ \epsilon.
\]
Since $\epsilon>0 $ was arbitrary, 
\[
d_{0}(\mA,\mB)\leq d_{0}(\mA \omin \mD, \mB\omin \mD).
\]

\end{proof}

\end{document}